\theoremstyle{plain}
\newtheorem{thm}{Theorem}
\newtheorem{lem}[thm]{Lemma}
\newtheorem{prop}[thm]{Proposition}
\newtheorem{cor}[thm]{Corollary}
\newtheorem{rmk}{Remark}
\newtheorem{dfn}{Definition}
\newtheorem{exm}{Example}
\newtheorem{thmm}{Theorem}
\begin{document}

\title{On Baire classification of mappings with values in connected spaces}

\author{Olena Karlova}

\email{maslenizza.ua@gmail.com}

\address{Department of Mathematical Analysis, Faculty of Mathematics and Informatics, Chernivtsi National University, Kotsyubyns'koho str., 2, Chernivtsi, Ukraine}

\begin{abstract}
  We generalize the Lebesgue-Hausdorff Theorem on the characterization of Baire-one functions for $\sigma$-strongly functionally discrete mappings defined on arbitrary topological spaces.
\end{abstract}

\keywords{ $\sigma$-strongly functionally discrete mapping, Baire-one mapping, Lebesgue-Hausdorff theorem}

\subjclass{Primary 54C08, 26A21;  Secondary 54C50, 54H05}

\maketitle

\section{Introduction}

A subset $A$ of a topological space $X$ is  {\it functionally $F_\sigma$- ($G_\delta$)-set} if  $A$ is the union (the intersection) of a sequence of functionally closed  (functionally open) subsets of $X$. 

Let $X$ and $Y$ be topological spaces and $f:X\to Y$ be a mapping. We say that $f$ belongs to
\begin{itemize}
\item  {\it the first Baire class}, $f\in {\rm B}_1(X,Y)$, if $f$ is a pointwise limit of a sequence of continuous mappings between  $X$ and $Y$;

   \item {\it the first Lebesgue class}, $f\in {\rm H}_1(X,Y)$, if $f^{-1}(V)$ is an $F_\sigma$-set in $X$ for any open subset $V$ of  $Y$;

   \item {\it the first functional Lebesgue class}, $f\in {\rm K}_1(X,Y)$, if $f^{-1}(V)$ is a functionally  $F_\sigma$-set in $X$ for any open subset $V$ of  $Y$;

   \item {\it the first weak functional Lebesgue class}, $f\in {\rm K}_1^w(X,Y)$, if $f^{-1}(V)$ is a functionally $F_\sigma$-set in $X$ for any functionally open subset $V$ of  $Y$.
\end{itemize}

The classical Lebesgue-Hausdorff theorem~\cite{Lebesgue:1905, Hausdorff:1957} tells us that the classes ${\rm B}_1(X,Y)$ and ${\rm H}_1(X,Y)$ coincide if $X$ is  a metrizable space and $Y=[0,1]^{\omega}$. This theorem was generalized by many mathematicians (see~\cite{Rolewicz, Laczk, Stegall, Hansell:1992, Rogers, JOPV, Fos, Vesely} and the literature given there). For classification of mappings with values in non-separable metric spaces, R.~Hansell in~\cite{Hansell:1971} introduced a notion of $\sigma$-discrete mapping. Namely, a mapping $f:X\to Y$ is {\it $\sigma$-discrete} if there exists a family $\mathcal B=\bigcup\limits_{n=1}^\infty \mathcal B_n$ of subsets of $X$ such that every family $\mathcal B_n$ is discrete in $X$ and the preimage $f^{-1}(V)$ of any open set $V\subseteq Y$ is a union of sets from $\mathcal B$. The class of all $\sigma$-discrete mappings between $X$ and $Y$ is denoted by $\Sigma(X,Y)$. Among others we note the following generalizations of the Lebesgue-Hausdorff theorem:

\begin{thmm}[R. Hansell~\cite{Hansell:1992}]\label{th:Hansell} Let $X$ be a collectionwise normal space and $Y$ be a closed convex subset of a Banach space. Then
${\rm B}_1(X,Y)={\rm H}_1(X,Y)\cap\Sigma(X,Y)$.
\end{thmm}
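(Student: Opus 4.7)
The plan is to verify the two inclusions separately, with the substantive work in the direction ${\rm H}_1(X,Y)\cap\Sigma(X,Y)\subseteq {\rm B}_1(X,Y)$.

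For the easy inclusion, suppose $f=\lim f_n$ pointwise with each $f_n$ continuous. Since $Y$ sits inside a Banach space it is metrizable, so every open $V\subseteq Y$ can be written as $V=\bigcup_m F_m$ with each $F_m$ closed. The identity
$$f^{-1}(V)=\bigcup_m\bigcup_k\bigcap_{n\ge k}f_n^{-1}(F_m)$$
exhibits $f^{-1}(V)$ as an $F_\sigma$-set, so $f\in {\rm H}_1(X,Y)$. For $\sigma$-discreteness I would use that the metric space $Y$ admits a $\sigma$-discrete base, that a continuous map from a collectionwise normal $X$ is $\sigma$-discrete (preimages of a $\sigma$-discrete base are $\sigma$-discrete in $X$), and that $\Sigma(X,Y)$ is closed under pointwise limits of sequences — a standard lemma in Hansell's framework.

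For the hard inclusion, let $f\in {\rm H}_1(X,Y)\cap\Sigma(X,Y)$. The strategy is to uniformly approximate $f$ by Baire-one mappings with ``$\sigma$-discrete ranges'' and then pass to the limit using the closed convex structure of $Y$. Fix $n$ and cover $Y$ by open balls $B_\alpha=B(y_\alpha,1/n)$. Using the $\sigma$-discreteness of $f$ together with the ${\rm H}_1$-property, I would refine $\{f^{-1}(B_\alpha)\}$ to a $\sigma$-discrete family $\{E_\alpha^{(n)}\}_{\alpha}$ of $F_\sigma$-sets covering $X$ with $f(E_\alpha^{(n)})\subseteq B_\alpha$, and then construct a selection $g_n\colon X\to Y$, $g_n(x)=y_\alpha$ whenever $x$ lies in a suitable piece of $E_\alpha^{(n)}$, so that $\sup_x \|g_n(x)-f(x)\|\le 1/n$.

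The next step is to show each $g_n$ lies in ${\rm B}_1(X,Y)$. Collectionwise normality lets me expand each discrete family of $F_\sigma$-sets into a discrete family of open sets and to produce a subordinate continuous partition of unity $\{\varphi_\alpha\}$; closed convexity of $Y$ then guarantees that the barycentric combination $\sum_\alpha\varphi_\alpha(x)y_\alpha$ stays in $Y$ and defines a continuous $Y$-valued map. A diagonal argument over the countably many discrete levels indexing $\{E_\alpha^{(n)}\}$ yields a sequence of continuous maps converging pointwise to $g_n$. Finally, since the $g_n$ converge uniformly to $f$ and ${\rm B}_1(X,Y)$ is closed under uniform convergence for closed convex $Y\subseteq$ Banach (via a telescoping convex-combination trick $f=\lim_k \sum_{n\le k}(g_n-g_{n-1})$), one concludes $f\in {\rm B}_1(X,Y)$.

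The main obstacle is the construction of continuous $Y$-valued pointwise approximants to each $g_n$: one must simultaneously exploit collectionwise normality (to convert discrete $F_\sigma$-families into discrete open families with partitions of unity), the ${\rm H}_1$-property (to guarantee the $F_\sigma$ decomposition needed to stratify each discrete level into countably many functionally closed pieces), and the closed convexity of $Y$ (so that the convex combinations produced by the partitions of unity remain in $Y$). Coordinating these three ingredients across countably many discrete levels — and verifying pointwise convergence after the diagonal — is the technical heart of Hansell's theorem.
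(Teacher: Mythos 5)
The paper does not actually prove this statement: Theorem~\ref{th:Hansell} is quoted as background from Hansell's paper, so there is no in-paper proof to compare against. The closest analogues in the paper are Theorem~\ref{BaireOne_is_SFD} (for your easy inclusion) and Proposition~\ref{prop:main_technical} together with Theorem~\ref{Sigma_is_BaireOne} (for the hard one), where convex combinations are replaced by arcs and collectionwise normality by strong functional discreteness. Your overall architecture --- uniform approximation of $f$ by maps constant on the pieces of a $\sigma$-discrete refinement, discrete open expansions with subordinate partitions of unity, convexity to keep the combinations inside $Y$, and closure of ${\rm B}_1$ under uniform limits --- is the standard route and is the right one in spirit.

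As written, though, there are concrete gaps. First, the identity $f^{-1}(V)=\bigcup_m\bigcup_k\bigcap_{n\ge k}f_n^{-1}(F_m)$ is false for an arbitrary closed decomposition $V=\bigcup_m F_m$: the inclusion $\subseteq$ fails at a point $x$ for which $f(x)$ lies on the boundary of every $F_m$ containing it, since then no tail of $(f_n(x))$ need stay in any $F_m$. You must choose the $F_m$ so that $F_m\subseteq \operatorname{int}F_{m+1}$ (the paper arranges this via $\overline{G_{k,V}}\subseteq G_{k+1,V}$); then $\subseteq$ uses the interiors and $\supseteq$ uses closedness. Second, your justification of the uniform-closure step is vacuous: the telescoping sum $\sum_{n\le k}(g_n-g_{n-1})$ is just $g_k$, and the differences $g_n-g_{n-1}$ are not $Y$-valued, so the ``convex-combination trick'' as stated proves nothing; the genuine lemma (uniform limits of ${\rm B}_1(X,Y)$-maps are ${\rm B}_1(X,Y)$ for closed convex $Y$) needs an argument that retracts or truncates the continuous approximants of each $g_n$ back into a small neighborhood using convexity. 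Third, the technical heart is only named, not executed: you need a reduction lemma producing a \emph{disjoint} $\sigma$-discrete refinement of $(f^{-1}(B_\alpha))_\alpha$ by $F_\sigma$-sets (this is exactly where the ${\rm H}_1$ hypothesis is consumed), and in the partition-of-unity step the expression $\sum_\alpha\varphi_\alpha(x)y_\alpha$ over one discrete level is not a point of $Y$ where all $\varphi_\alpha$ vanish --- it must be anchored by a term $\bigl(1-\sum_\alpha\varphi_\alpha(x)\bigr)y^*$ with $y^*$ the value from the previous approximation level, which is precisely the role played by the arcs $\gamma_{F,k}$ in the paper's Proposition~\ref{prop:main_technical}. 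So what you have is a correct plan with the right ingredients, but not yet a proof.
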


\begin{thmm}[M.~Fosgerau~\cite{Fos}]\label{th:Fosgerau} Let  $Y$ be a complete metric space. Then the following conditions are equivalent:
\begin{enumerate}
  \item[{\rm (i)}] $Y$ is connected and locally connected;

  \item[{\rm (ii)}] ${\rm H}_1([0,1],Y)={\rm B}_1([0,1],Y)$;

  \item[{\rm (iii)}] ${\rm H}_1(X,Y)\cap\Sigma(X,Y)={\rm B}_1(X,Y)$ for any metric space $X$.
\end{enumerate}
\end{thmm}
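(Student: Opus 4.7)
I would prove the three-way equivalence in the cycle $(\mathrm{i})\Rightarrow(\mathrm{iii})\Rightarrow(\mathrm{ii})\Rightarrow(\mathrm{i})$. The implication $(\mathrm{iii})\Rightarrow(\mathrm{ii})$ reduces to showing that every mapping in $\mathrm{H}_1([0,1],Y)$ is automatically $\sigma$-discrete, which I would obtain by restricting to the separable subspace $\overline{f([0,1])}\subseteq Y$ and turning a countable base into a $\sigma$-discrete base of preimages via their $F_\sigma$ decompositions in $[0,1]$.

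For $(\mathrm{ii})\Rightarrow(\mathrm{i})$, I would argue by contraposition. If $Y=A\sqcup B$ with $A,B$ nonempty clopen, set $f=a\in A$ on the rationals of $[0,1]$ and $f=b\in B$ on the irrationals. Then $f\in\mathrm{H}_1([0,1],Y)$, but since $[0,1]$ is connected, any continuous $f_n$ satisfies $f_n([0,1])\subseteq A$ or $\subseteq B$; a subsequence argument applied to a supposed pointwise limit $f_n\to f$ then yields a contradiction. Failure of local connectedness at some $y_0$ is handled analogously: choose an open $U\ni y_0$ admitting no connected neighborhood of $y_0$ inside $U$, and construct $f$ oscillating between $y_0$ and a sequence in $Y\setminus U$ converging to $y_0$, so that any continuous approximation must eventually traverse a forbidden gap.

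The main direction, $(\mathrm{i})\Rightarrow(\mathrm{iii})$, is the heart of the theorem. My approach would exploit the classical Mazurkiewicz--Moore fact that a connected, locally connected, complete metric space is uniformly locally arcwise connected: for every $\varepsilon>0$ there exists $\delta>0$ such that any two points within $\delta$ are joined by an arc of diameter $<\varepsilon$. Given $f\in\mathrm{H}_1(X,Y)\cap\Sigma(X,Y)$ with $\sigma$-discrete base $\mathcal B=\bigcup_n\mathcal B_n$, I would inductively construct continuous $f_n:X\to Y$ converging pointwise to $f$. At level $n$, cover $Y$ by connected open sets of diameter less than the $\delta_n$ corresponding to $\varepsilon_n=2^{-n}$; the $\sigma$-discrete structure of the $f$-preimages, combined with paracompactness of the metric space $X$, yields a $\sigma$-discrete closed refinement of $X$ on which $f$ varies by less than $\delta_n$. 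I would then build $f_n$ by selecting a representative value on each cell and stitching the selections continuously using arcs of diameter $<\varepsilon_n$, modulated by a partition of unity subordinate to the cover.

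The principal obstacle is this stitching step. If $Y$ were a closed convex subset of a Banach space, a convex combination driven by the partition of unity would complete the argument immediately (via Hansell's Theorem~A). Without convex structure, each convex combination must be replaced by a homotopy along arcs of $Y$, and one has to simultaneously control the resulting values across the $\sigma$-discrete cells so that continuity of $f_n$ survives at cell boundaries while $f_n\to f$ pointwise. A cleaner but less self-contained alternative would be to embed $Y$ isometrically into a Banach space $E$, apply Theorem~A to obtain a Baire-one $\tilde f:X\to E$, and then retract its continuous approximations back into $Y$ using the uniform local arc-connectedness property.
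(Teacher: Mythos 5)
You should first note that the paper does not prove Theorem~\ref{th:Fosgerau} at all: it is quoted from Fosgerau~\cite{Fos} as motivation, and the paper's own contribution is the parallel statement for $\Sigma^f$ and arbitrary $X$, proved via Theorems~\ref{BaireOne_is_SFD} and~\ref{Sigma_is_BaireOne} and Lemmas~\ref{lemma:aac}, \ref{lemma:loc_aac}, \ref{lemma:complete_aac}. Measured against that machinery your plan has the right global shape (a cycle of implications, with $\sigma$-discreteness of ${\rm H}_1$-maps on $[0,1]$ coming for free from separability of the image), but it contains concrete errors. In (ii)$\Rightarrow$(i) your witness for connectedness, $f=a$ on the rationals and $f=b$ on the irrationals, does not lie in ${\rm H}_1([0,1],Y)$: the set $B$ is open and $f^{-1}(B)$ is the set of irrationals, which is not $F_\sigma$ in $[0,1]$ by Baire category. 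The correct witness is the one in Lemma~\ref{lemma:aac}: $f\equiv a$ on $[0,1)$ and $f(1)=b$. Likewise your local-connectedness witness, ``a sequence in $Y\setminus U$ converging to $y_0$,'' cannot exist when $U$ is an open neighborhood of $y_0$; the actual construction (cf.\ Lemma~\ref{lemma:loc_aac}) places the oscillation values $y_{n,i}$ \emph{inside} small neighborhoods of $y_0$, chosen in pairs that cannot be joined without leaving $\overline V$, and derives the contradiction from Baire category applied to the dense open sets $G_m$.

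The heart of the theorem, (i)$\Rightarrow$(iii), is exactly the step you leave open, and two of your suggestions there do not work as stated. First, a connected, locally connected, completely metrizable space need not be \emph{uniformly} locally arcwise connected (a single $\delta$ for all points fails already for a locally finite graph in the plane with ever-narrower ``staples''); what Mazurkiewicz--Moore gives, and what suffices, is local arcwise connectedness, which yields for each $k$ a cover $\mathcal U_k$ of $Y$ by open sets within which any two points are joined by arcs of diameter $<2^{-k-2}$ --- this is precisely how the proof of Theorem~\ref{Sigma_is_BaireOne} begins. Second, the shortcut ``embed $Y$ into a Banach space, apply Theorem~\ref{th:Hansell}, and retract back into $Y$'' requires a retraction of a neighborhood of $Y$ in the Banach space onto $Y$, i.e.\ that $Y$ be an ANR; connectedness plus local arcwise connectedness does not give this (the Hawaiian earring is a compact counterexample). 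The stitching problem you correctly identify as the principal obstacle is solved in the paper by Proposition~\ref{prop:main_technical}: one organizes the base into a refining tree of discrete families $\mathcal F_0,\mathcal F_1,\dots,\mathcal F_n$, picks representative values $p_k(F)$ with $d(f,p_k(F))<2^{-k-2}$ on $F$, joins $p_k(F')$ to $p_{k+1}(F)$ by arcs of diameter $<2^{-k-2}$, and interpolates along those arcs by composing them with Urysohn-type functions $h_k$ supported on discrete families of (functionally) open sets; the telescoping estimate~(\ref{c3}) then gives the uniform control needed for pointwise convergence. Without some version of this level-by-level arc interpolation your proof is not complete.
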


In order to weaken the condition on $X$ in Theorem~\ref{th:Fosgerau}, L.~Vesel\'{y}~\cite{Vesely} considered strongly  $\sigma$-discrete mappings. A mapping $f:X\to Y$ is {\it strongly $\sigma$-discrete} if there exists a family $\mathcal B=\bigcup\limits_{n=1}^\infty\mathcal B_n$ of subsets of $X$ such that for every family  $\mathcal B_n=(B_i:i\in I_n)$ there exists a discrete family $(U_i:i\in I_n)$ of open sets in $X$ with $\overline{B_i}\subseteq U_i$ for all  $i\in I_n$ and, moreover, the preimage $f^{-1}(V)$ of any open set $V$ in $Y$ is a union of sets from $\mathcal B$. Vesel\'{y}~\cite{Vesely} denoted the collection of all such mappings by $\Sigma^*(X,Y)$ and proved the following theorem.

\begin{thmm}[L.~Vesel\'{y}~\cite{Vesely}]\label{th:Vesely}
Let $X$ be a normal space and $Y$ be a connected and locally arcwise connected metrizable  space. Then
${\rm B}_1(X,Y)={\rm H}_1(X,Y)\cap\Sigma^*(X,Y)$.
\end{thmm}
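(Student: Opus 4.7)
The plan is to prove the two inclusions separately. The inclusion ${\rm B}_1(X,Y)\subseteq{\rm H}_1(X,Y)\cap\Sigma^*(X,Y)$ is the routine direction. For a pointwise limit $f=\lim_n f_n$ of continuous $f_n$, ${\rm H}_1$ membership follows by writing each open $V\subseteq Y$ as an increasing union $V=\bigcup_k F_k$ of closed sets with $F_k\subseteq{\rm int}\,F_{k+1}$ (possible since $Y$ is metrizable) and observing that $f^{-1}(V)=\bigcup_k\bigcup_N\bigcap_{n\geq N}f_n^{-1}(F_{k+1})$, which is $F_\sigma$ as a countable union of closed sets. The $\Sigma^*$ membership is obtained by lifting a $\sigma$-discrete base of $Y$ (guaranteed by Bing's metrization theorem) through the continuous $f_n$ and using normality of $X$ to produce the required strong discrete expansion.

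For the substantive inclusion ${\rm H}_1(X,Y)\cap\Sigma^*(X,Y)\subseteq{\rm B}_1(X,Y)$, fix such an $f$, a compatible metric $d$ on $Y$, and a family $\mathcal B=\bigcup_k\mathcal B_k$ witnessing $f\in\Sigma^*(X,Y)$. The plan is to construct continuous $f_n:X\to Y$ with $d(f_n(x),f(x))\to 0$ at every $x$. For each $n$, local arcwise connectedness of $Y$ provides an open cover $\mathcal W_n$ of $Y$ by arcwise connected sets of diameter $<1/n$; since $f^{-1}(W)$ is a union of sets from $\mathcal B$ for each $W\in\mathcal W_n$, one extracts a subfamily $\mathcal B(n)\subseteq\mathcal B$ that still covers $X$ and such that every $B\in\mathcal B(n)$ has $f(B)$ contained in some $W\in\mathcal W_n$. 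The strong $\sigma$-discrete structure supplies, for each discrete level $(B_i)_{i\in I}$ of $\mathcal B(n)$, a discrete open expansion $(V_i)_{i\in I}$ with $\overline{B_i}\subseteq V_i$; normality of $X$ and Urysohn's lemma then give continuous $\varphi_i:X\to[0,1]$ equal to $1$ on $B_i$ and $0$ off $V_i$. Pick a representative $y_i\in f(B_i)$ for each $i$.

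Now build $f_n$ by an inductive deformation over the discrete levels: starting from a continuous $g_0:X\to Y$, at each level use discreteness of $(V_i)_{i\in I}$ to update the current map on each $V_i$ along an arc in $Y$ ending at $y_i$ parameterized by $\varphi_i$, and leave it unchanged off $\bigcup_i V_i$. Pairwise disjointness of the $V_i$ and vanishing of $\varphi_i$ on $\partial V_i$ keep the update continuous. A uniform or diagonal combination across levels produces $f_n$ taking the value $y_i$ on each $B_i$; since the $B_i$ cover $X$ for each fixed $n$, one obtains $d(f_n(x),f(x))<1/n$ up to the error committed by the interpolating arcs, yielding $f_n\to f$ pointwise.

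The main obstacle will be carrying out the arc-deformation so that (i) each $f_n$ is genuinely continuous on all of $X$, not merely on each $V_i$, and (ii) the arcs on $V_i\setminus B_i$, where $f$ is not directly pinned, stay within a controlled neighborhood of $f(x)$. Point (i) rests on the discreteness of the open expansion together with the vanishing of $\varphi_i$ on $\partial V_i$. Point (ii) is precisely where connectedness of $Y$ combined with local arcwise connectedness is essential, as it permits choosing arcs that lie in prescribed small arcwise-connected neighborhoods of the representatives $y_i$. The strong part of $\Sigma^*$ compensates for having only normality on $X$ in place of paracompactness, supplying the open expansions on which Urysohn's lemma directly yields the required partition functions.
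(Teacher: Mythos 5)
Your easy direction is fine, and your overall strategy for the hard direction (deforming a continuous map along arcs supported on discrete open expansions, controlled by Urysohn functions) is the same one the paper implements in its Approximation Lemma (Proposition~\ref{prop:main_technical}) and Theorem~\ref{Sigma_is_BaireOne}. But the step you yourself flag as ``the main obstacle'' is a genuine gap, not a technicality, and your sketch does not contain the idea that closes it. The problem is twofold. First, for fixed $n$ the subfamily $\mathcal B(n)$ covering $X$ with $f$-images of diameter $<1/n$ is only $\sigma$-discrete, a union of countably many discrete levels none of which covers $X$; a ``uniform or diagonal combination across levels'' that makes $f_n$ equal to $y_i$ on every $B_i$ of every level is precisely what cannot be done directly, since infinitely many successive deformations need not converge to a continuous map and later deformations destroy the values pinned by earlier ones. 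Second, and more seriously, when you deform the current map on $V_i$ along an arc ending at $y_i$, the points of $V_i\setminus B_i$ are moved to values near $y_i$ even though $f$ at those points may be far from $y_i$; local arcwise connectedness only supplies a \emph{short} arc when the starting value of the arc already lies within $1/n$ of $y_i$, and nothing in your setup guarantees that.

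The missing idea is a nesting structure across levels. One must first monotonize the $\sigma$-discrete base (the paper quotes Lemma~13 of \cite{Karlova:TA:2015} for this) so as to produce, for each $n$, a finite chain $\mathcal F_1\succ\mathcal F_2\succ\dots\succ\mathcal F_n$ of strongly (functionally) discrete families of closed sets in which every member of $\mathcal F_{k+1}$ is contained in a member of $\mathcal F_k$ and has $f$-image of diameter $<2^{-k-2}$. Then the representative of $F\in\mathcal F_{k+1}$ can be chosen in $U_F\cap U_{F'}$, so that consecutive representatives are joined by arcs of diameter $<2^{-k-2}$, and the support of the $(k+1)$-st deformation is forced inside the region where the $k$-th map is locally constant equal to $p_k(F')$. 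This is what makes each deformation move \emph{every} point of $X$ by less than $2^{-k-2}$, yielding the bound $d(f,g)<2^{-k}$ on $\cup\mathcal F_k$, while pointwise convergence follows because each $x$ eventually lies in $\cup\mathcal F_{k,n}$ for all large $n$ --- only finitely many levels are used per approximant. Note finally that the paper does not prove Vesel\'y's theorem directly: it deduces it from Theorem~\ref{th:EqualityB1SFD} by observing that for normal $X$ strongly discrete families are strongly functionally discrete (so $\Sigma^*(X,Y)=\Sigma^f(X,Y)$) and ${\rm H}_1(X,Y)={\rm K}_1(X,Y)$ for metrizable $Y$ via Proposition~\ref{prop:H1=B1}(3),(4). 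A self-contained argument along your lines must still supply the refinement machinery above.
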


The aim of this paper is a generalization of Theorems~\ref{th:Fosgerau} and~\ref{th:Vesely} for mappings defined on an arbitrary topological space.
With this purpose in Section~\ref{sect:one} we shall introduce a class $\Sigma^f(X,Y)$ of $\sigma$-strongly functionally discrete mappings (which  coincides with the class $\Sigma^*(X,Y)$ if the space $X$ is normal).

\begin{thmm}
  Let $X$ be a topological space and $Y$ be a connected and locally arcwise connected metrizable space. Then
  ${\rm B}_1(X,Y)={\rm K}_1(X,Y)\cap\Sigma^f(X,Y)$.
\end{thmm}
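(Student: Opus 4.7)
I would establish the two inclusions separately.

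\emph{Forward inclusion ${\rm B}_1(X,Y)\subseteq {\rm K}_1(X,Y)\cap\Sigma^f(X,Y)$.} I would first verify that every continuous $f:X\to Y$ lies in the right-hand class: metrizability of $Y$ writes each open $V\subseteq Y$ as a countable union of functionally closed sets whose continuous preimages are again functionally closed, giving $f\in {\rm K}_1$, while a $\sigma$-discrete base of $Y$ (available by Bing's theorem) together with discrete families of functionally open enlargements of its members pulls back through $f$ to witness $f\in \Sigma^f$. To promote this to all of ${\rm B}_1$ I would rely on closure under pointwise limits: the $K_1$-stability is immediate from the identity $f^{-1}(V)=\bigcup_{k,N}\bigcap_{n\ge N}f_n^{-1}(\overline{V_k})$ for an expanding sequence $\overline{V_k}\uparrow V$ of functionally closed sets, while the analogous stability of $\Sigma^f$ is part of the theory to be developed in Section~\ref{sect:one}.

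\emph{Reverse inclusion ${\rm K}_1(X,Y)\cap\Sigma^f(X,Y)\subseteq {\rm B}_1(X,Y)$.} Given $f$ in the intersection and a compatible metric $d$ on $Y$, I would strengthen the target and construct, for every $\varepsilon>0$, a continuous $g:X\to Y$ with $\sup_{x\in X}d(g(x),f(x))<\varepsilon$; applying this with $\varepsilon_n=1/n$ yields a uniformly (hence pointwise) convergent sequence of continuous approximants to $f$. To build $g$ I would adapt the Hansell--Vesel\'{y} scheme. By local arcwise connectedness of $Y$, pick a locally finite open cover $(W_i)_{i\in I}$ of $Y$ by arcwise connected sets of diameter less than $\varepsilon/3$. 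Using ${\rm K}_1$ combined with $\Sigma^f$, refine the pullback cover $\{f^{-1}(W_i):i\in I\}$ of $X$ to a $\sigma$-strongly functionally discrete family $\mathcal{B}=\bigcup_n\mathcal{B}_n$ with every $B\in\mathcal{B}$ contained in some $f^{-1}(W_{i(B)})$. On each level $\mathcal{B}_n=(B_j)_{j\in J_n}$ the discrete functionally open enlargements $(U_j)_{j\in J_n}$ with $\overline{B_j}\subseteq U_j$ give continuous bumps $\varphi_{n,j}:X\to[0,1]$ equal to $1$ on $B_j$ and $0$ off $U_j$, which will drive the construction.

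\emph{The main obstacle} is patching the local data into a single continuous $Y$-valued map, since $Y$ carries no linear structure and ordinary convex combinations are unavailable. I would follow Vesel\'{y}'s remedy from the proof of Theorem~\ref{th:Vesely}: fix a representative $y_B\in W_{i(B)}$ for each $B\in\mathcal{B}$, and use local arcwise connectedness to interpolate between the $y_B$'s along continuous arcs in $Y$ whenever the enlargements $U_j$ overlap, with each arc confined to a controlled $\varepsilon$-scale. Processing the levels $n=1,2,\ldots$ in order and exploiting the discreteness within each level to perform the interpolation locally, I would assemble $g$ by successive modifications driven by the bumps $\varphi_{n,j}$. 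The delicate steps I anticipate are (i) preserving continuity across interactions between different levels, which is exactly where the \emph{functional} character of the discreteness is essential---it yields honest continuous bumps on $X$, not merely open sets separating the $B_j$'s---and (ii) controlling the cumulative deviation of $g$ from $f$ across the countably many levels. Once these are handled, the desired $\varepsilon$-approximation is achieved and the argument closes.
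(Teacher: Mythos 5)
Your forward inclusion is essentially the paper's argument (Theorem~\ref{BaireOne_is_SFD}): the identity $f^{-1}(V)=\bigcup_{k}\bigcap_{n\ge k}f_n^{-1}(\overline{G_{k,V}})$ applied to a $\sigma$-discrete base of $Y$, with the observation that $\bigcap_{n\ge k}f_n^{-1}(\overline{G_{k,V}})\subseteq f_k^{-1}(V)$ so that discreteness of $(f_k^{-1}(V):V\in\mathcal V_m)$ yields the functionally open discrete enlargements. That part is sound, modulo the deferred verification that the resulting base witnesses membership in ${\rm K}_1\cap\Sigma^f$.

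The reverse inclusion, however, rests on a step that cannot work: you propose to produce, for every $\varepsilon>0$, a continuous $g$ with $\sup_{x\in X}d(g(x),f(x))<\varepsilon$. A uniform limit of continuous mappings into a metric space is continuous, so if this were possible for every $f\in{\rm K}_1(X,Y)\cap\Sigma^f(X,Y)$, every such $f$ would itself be continuous. This is false already for $X=Y=[0,1]$ and $f=\chi_{\{0\}}$, which lies in the class (preimages of open sets are functionally $F_\sigma$, and any map into a second countable metric space is $\sigma$-sfd over $[0,1]$) but admits no continuous $g$ with $\sup|g-f|<\tfrac12$. The whole point of the Lebesgue--Hausdorff scheme is that the $n$-th approximant $g_n$ is only guaranteed to be $2^{-k}$-close to $f$ on a set $\cup\mathcal F_{k,n}$, where for each fixed $k$ these sets increase to $X$ as $n\to\infty$; this yields pointwise but emphatically not uniform convergence. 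That is exactly what the paper's Approximation Lemma (Proposition~\ref{prop:main_technical}) delivers: a finite tower $\mathcal F_0\succ\mathcal F_1\succ\dots\succ\mathcal F_n$ of strictly functionally discrete families, points $p_k(F)$ joined by arcs of geometrically shrinking diameter along the tower, and a single continuous $g$ with $d(f(x),g(x))<2^{-k}$ only for $x\in\cup\mathcal F_k$. Your gluing mechanism (bump functions from the functionally open enlargements driving arcs between representative points) is the right local device and matches the paper's use of the functions $\varphi_{F,k}$ and arcs $\gamma_{F,k}$, but it must be organized into this nested, level-by-level refinement with the error controlled only on $\cup\mathcal F_k$, not into a global $\varepsilon$-approximation. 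As written, the reverse inclusion does not close.
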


\begin{thmm} For a completely metrizable space $Y$ the following conditions are equivalent:
\begin{enumerate}
   \item[{\rm (i)}] $Y$ is connected and locally arcwise connected;

   \item[{\rm (ii)}] ${\rm K}_1(X,Y)\cap \Sigma^f(X,Y)={\rm B}_1(X,Y)$ for any topological space $X$;

   \item[{\rm (iii)}] ${\rm H}_1([0,1],Y)={\rm B}_1([0,1],Y)$.
\end{enumerate}
\end{thmm}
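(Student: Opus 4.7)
The plan is to prove the cycle (i)~$\Rightarrow$~(ii)~$\Rightarrow$~(iii)~$\Rightarrow$~(i). The implication (i)~$\Rightarrow$~(ii) is precisely the main theorem stated immediately above, so no further work is required there; the reverse inclusion ${\rm B}_1\subseteq{\rm K}_1\cap\Sigma^f$ holds in general, so (ii) is genuinely an equality.

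For (ii)~$\Rightarrow$~(iii), I specialize the hypothesis to $X=[0,1]$. Because $[0,1]$ is perfectly normal (being metrizable), every open set is functionally open and every $F_\sigma$-set is functionally $F_\sigma$, giving ${\rm K}_1([0,1],Y)={\rm H}_1([0,1],Y)$. Normality of $[0,1]$ yields $\Sigma^f([0,1],Y)=\Sigma^*([0,1],Y)$ (as recorded in Section~\ref{sect:one}), and metrizability of $[0,1]$ yields $\Sigma^*([0,1],Y)=\Sigma([0,1],Y)$, since every discrete family in a metric space admits a discrete open expansion. By Hansell's classical decomposition theorem, every Borel measurable mapping from a complete metric space into a metric space is $\sigma$-discrete, so in particular ${\rm H}_1([0,1],Y)\subseteq\Sigma([0,1],Y)=\Sigma^f([0,1],Y)$. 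Combining, ${\rm H}_1([0,1],Y)\subseteq{\rm K}_1([0,1],Y)\cap\Sigma^f([0,1],Y)={\rm B}_1([0,1],Y)$ by (ii), while ${\rm B}_1\subseteq{\rm H}_1$ is the elementary half of the Lebesgue--Hausdorff theorem. This yields (iii).

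For (iii)~$\Rightarrow$~(i), I invoke Fosgerau's Theorem~\ref{th:Fosgerau}: the present condition (iii) coincides with condition (ii) of that theorem, so its equivalent (i) forces $Y$ to be connected and locally connected. Since $Y$ is completely metrizable, the classical Mazurkiewicz--Moore--Menger theorem upgrades local connectedness to local arcwise connectedness, producing (i). The main delicate point of the whole argument is the inclusion ${\rm H}_1([0,1],Y)\subseteq\Sigma^f([0,1],Y)$ used in the middle implication; it rests on Hansell's $\sigma$-discrete decomposition of Borel measurable mappings together with the collapse $\Sigma=\Sigma^*=\Sigma^f$ available in the metric-normal setting. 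Should either of these ingredients not already be supplied in Section~\ref{sect:one}, the alternative is to build the witness family $\mathcal B$ directly from a $\sigma$-discrete base of $Y$ and the $F_\sigma$-decompositions of its preimages, using the metric on $[0,1]$ to produce the required discrete open expansions.
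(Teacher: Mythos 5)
Your proof is correct, and the first two implications match the paper's argument exactly: (i)$\Rightarrow$(ii) is Theorem~\ref{th:EqualityB1SFD}, and (ii)$\Rightarrow$(iii) is obtained, as in the paper, from Hansell's theorem that ${\rm H}_1$-mappings on a complete metric domain are $\sigma$-discrete~\cite{Hansell:1971}, combined with the collapse ${\rm K}_1={\rm H}_1$ and $\Sigma=\Sigma^*=\Sigma^f$ on the perfectly normal, collectionwise normal space $[0,1]$ (your justification of that collapse is sound). Where you genuinely diverge is (iii)$\Rightarrow$(i): you treat Fosgerau's Theorem~\ref{th:Fosgerau} as a black box to conclude that $Y$ is connected and locally connected, and then apply the Mazurkiewicz--Moore--Menger upgrade for complete metric spaces. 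The paper instead gives a self-contained argument: Lemmas~\ref{lemma:aac} and~\ref{lemma:loc_aac} extract almost arcwise connectedness and local almost arcwise connectedness directly from a pointwise approximating sequence (via a Baire category argument on $[0,1]$), Lemma~\ref{lemma:reg_aac_is_lc} shows regular locally almost arcwise connected spaces are locally connected, and Lemma~\ref{lemma:complete_aac} performs the same final upgrade you use, citing~\cite[p.~254]{Kuratowski:Top:2}. Your route is shorter and perfectly legitimate, since Theorem~\ref{th:Fosgerau} is a published result quoted in the introduction; the paper's route buys independence from~\cite{Fos} and yields the intermediate Lemmas~\ref{lemma:aac} and~\ref{lemma:loc_aac}, which hold under the weaker hypothesis that $Y$ is merely first-countable and are of independent interest. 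Your fallback remark about constructing the $\sigma$-discrete base by hand is a reasonable insurance policy but is not needed, since the paper explicitly records Hansell's decomposition result in Section~\ref{sect:one}.
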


\section{Relations between Baire-one, Lebesgue-one and $\sigma$-discrete mappings}\label{sect:one}

\begin{dfn}
 {\rm A family $\mathcal A=(A_i:i\in I)$ of subsets of a topological space $X$ is called
  \begin{enumerate}
    \item[(1)] {\it discrete}, if every point of $X$ has an open neighborhood which intersects at most one set from $\mathcal A$;

    \item[(2)] {\it strongly discrete}, if there exists a discrete family $(U_i:i\in I)$ of open subsets of $X$ such that $\overline{A_i}\subseteq U_i$ for every $i\in I$;

    \item[(3)] {\it strongly functionally discrete} or {\it sfd-family}, if there exists a discrete family $(U_i:i\in I)$ of functionally open subsets of $X$ such that $\overline{A_i}\subseteq U_i$ for every $i\in I$;
  \end{enumerate}}
\end{dfn}

Notice that   (3) $\Rightarrow$ (2) $\Rightarrow$ (1) for any $X$; a topological space $X$ is collectionwise normal if and only if every discrete family in $X$ is strongly discrete; if $X$ is normal then (2) $\Leftrightarrow$ (3).

\begin{dfn}
{\rm Let $\mathcal P$ be a property of a family of subsets of topological space. A family $\mathcal A$ is called {\it a $\sigma$-$\mathcal P$-family} if  $\mathcal A=\bigcup\limits_{n=1}^\infty {\mathcal A}_n$, where every family $\mathcal A_n$ has the property $\mathcal P$.}
\end{dfn}

\begin{dfn}\label{def:base-for-a-function}
{\rm A family $\mathcal B$ of sets of a topological space $X$ is called {\it a base}  for a mapping $f:X\to Y$ if the preimage $f^{-1}(V)$ of an arbitrary open set  $V$ in $Y$ is a union of sets from $\mathcal B$.}
\end{dfn}

\begin{dfn}
{\rm  If a mapping $f:X\to Y$ has a base which is a $\sigma$-$\mathcal P$-family, then we say that $f$ is {\it a $\sigma$-$\mathcal P$ mapping}.}
\end{dfn}

The collection of all $\sigma$-$\mathcal P$ mappings between $X$ and $Y$  will be denoted by
\begin{itemize}
  \item $\Sigma(X,Y)$ if $\mathcal P$ is the property of discreteness;

  \item $\Sigma^*(X,Y)$ if $\mathcal P$ is the property of a strong discreteness;

  \item $\Sigma^f(X,Y)$ if $\mathcal P$ is the property of a strong functional discreteness.
\end{itemize}

By $\Sigma^{f*}_{0}(X,Y)$ we denote the collection of all mappings between $X$ and $Y$ which has a $\sigma$-sfd base of functionally closed subsets of $X$.

Notice that every mapping into a second-countable space is $\sigma$-discrete. Hansell proved in~\cite{Hansell:1971} that  every Borel measurable mapping (in particular, every ${\rm H}_1$-mapping) $f:X\to Y$ is $\sigma$-discrete if $X$ is a complete metric space and $Y$ is a metric space.

\begin{thm}\label{BaireOne_is_SFD}
  Let $X$ be a topological space and $Y$ be a metrizable space. Then
  $$
  {\rm B}_1(X,Y)\subseteq \Sigma^{f*}_0(X,Y)={\rm K}_1(X,Y)\cap\Sigma^f(X,Y).
  $$
\end{thm}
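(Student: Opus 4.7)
The plan is to prove the three inclusions
$\mathrm{B}_1(X,Y)\subseteq\Sigma^{f*}_0(X,Y)$,
$\Sigma^{f*}_0(X,Y)\subseteq\mathrm{K}_1(X,Y)\cap\Sigma^f(X,Y)$, and
$\mathrm{K}_1(X,Y)\cap\Sigma^f(X,Y)\subseteq\Sigma^{f*}_0(X,Y)$
in turn. The technical workhorse is a preliminary lemma: if $(A_\alpha)$ is an sfd family of functionally closed sets in $X$ with witnesses $(U_\alpha)$, then $\bigcup_\alpha A_\alpha$ is functionally closed. Indeed, $A_\alpha$ and $X\setminus U_\alpha$ are disjoint zero sets, so the standard ratio construction yields continuous $\phi_\alpha\colon X\to[0,1]$ with $\phi_\alpha|_{A_\alpha}=0$ and $\phi_\alpha|_{X\setminus U_\alpha}=1$; the infimum $\phi=\inf_\alpha\phi_\alpha$ is continuous (in any neighborhood meeting at most one $U_\alpha$ all but one $\phi_\alpha$ are identically $1$) and satisfies $\phi^{-1}(0)=\bigcup_\alpha A_\alpha$. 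The inclusion $\Sigma^{f*}_0\subseteq\mathrm{K}_1\cap\Sigma^f$ is then immediate: if $\mathcal{B}=\bigcup_n\mathcal{B}_n$ is a $\sigma$-sfd base of functionally closed sets for $f$, then $f^{-1}(V)=\bigcup_n\bigcup\{B\in\mathcal{B}_n:B\subseteq f^{-1}(V)\}$, and the inner union is sfd of functionally closed sets, hence functionally closed by the lemma.

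For $\mathrm{K}_1\cap\Sigma^f\subseteq\Sigma^{f*}_0$, I fix a $\sigma$-discrete base $\mathcal{V}=\bigcup_m\mathcal{V}_m$ of $Y$ (available by Bing--Nagata--Smirnov) and a $\sigma$-sfd base $\mathcal{B}=\bigcup_n\mathcal{B}_n$ of $f$ with witnesses $(U_i^n)_{i\in I_n}$; using $\mathrm{K}_1$, write each $f^{-1}(V)=\bigcup_l Z_{V,l}$ with $Z_{V,l}$ functionally closed. The crucial auxiliary sets are
\[
G_n(V)=\bigcup\{U_i^n:i\in I_n,\ B_i^n\subseteq f^{-1}(V)\},
\]
which are functionally open because the discrete family $(U_i^n)_i$ admits a continuous sum of its defining functions. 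The key observation is that for fixed $m$ and $n$ the family $(G_n(V))_{V\in\mathcal{V}_m}$ is discrete in $X$: for distinct $V,V'\in\mathcal{V}_m$ one has $V\cap V'=\emptyset$ in $Y$, hence the index sets $\{i:B_i^n\subseteq f^{-1}(V)\}$ and $\{i:B_i^n\subseteq f^{-1}(V')\}$ are disjoint, and the claim then follows from the discreteness of $(U_i^n)_i$ in $X$. Exhausting $G_n(V)$ by functionally closed $H_n^k(V)$ and setting $C_{V,l,n,k}=Z_{V,l}\cap H_n^k(V)$ gives a $\sigma$-sfd base of $f$ consisting of functionally closed sets, with the families indexed by $V\in\mathcal{V}_m$ inheriting the sfd structure from $(G_n(V))_V$.

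For the main inclusion $\mathrm{B}_1\subseteq\Sigma^{f*}_0$, let $f=\lim_n f_n$ pointwise with $f_n$ continuous. Fix a $\sigma$-discrete base $\mathcal{V}=\bigcup_j\mathcal{V}_j$ of $Y$; for each $V\in\mathcal{V}$ and $k\in\mathbb{N}$ introduce the metric shrinkage $V^{(k)}=\{y\in V:\rho(y,Y\setminus V)>1/k\}$, whose closure $\overline{V^{(k)}}$ is functionally closed in $Y$ and contained in $V$. Put
\[
C_{V,k,N}=\bigcap_{n\geq N}f_n^{-1}\bigl(\overline{V^{(k)}}\bigr).
\]
Each $C_{V,k,N}$ is functionally closed in $X$ (zero sets are preserved by continuous preimages and countable intersections). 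Pointwise convergence gives $f^{-1}(V)=\bigcup_{k,N}C_{V,k,N}$: the $\supseteq$ inclusion is trivial from $\overline{V^{(k)}}\subseteq V$, while for $\subseteq$ one picks $k$ with $f(x)\in V^{(k)}$ and uses $f_n(x)\to f(x)$ to place $f_n(x)$ in $\overline{V^{(k')}}$ eventually. For fixed $j,k,N$ the family $(f_N^{-1}(V))_{V\in\mathcal{V}_j}$ is a discrete family of functionally open sets in $X$ (continuous preimage of a discrete family of functionally open sets in $Y$), and since $C_{V,k,N}\subseteq f_N^{-1}(V)$, this witnesses that $(C_{V,k,N})_{V\in\mathcal{V}_j}$ is sfd.

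The main obstacle I anticipate is the $\mathrm{K}_1\cap\Sigma^f\subseteq\Sigma^{f*}_0$ step: the $\Sigma^f$-witnesses live in $X$ with no built-in compatibility with the discrete structure of $\mathcal{V}_m$ in $Y$, so constructing $G_n(V)$ to fuse the two discrete structures is the decisive move. By comparison the $\mathrm{B}_1$ inclusion is mostly bookkeeping once the shrinkages $V^{(k)}$ are in place, because the continuity of $f_N$ supplies the necessary discrete witnesses in $X$ automatically.
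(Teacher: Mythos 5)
Your proof is correct, and for the central inclusion $\mathrm{B}_1(X,Y)\subseteq\Sigma^{f*}_0(X,Y)$ it is essentially the paper's argument: the paper also fixes a $\sigma$-discrete base $\mathcal V=\bigcup_m\mathcal V_m$ of $Y$, exhausts each $V$ by closed sets $\overline{G_{k,V}}$ (your metric shrinkages $\overline{V^{(k)}}$ play the same role), forms $F_{k,V}=\bigcap_{n\ge k}f_n^{-1}(\overline{G_{k,V}})$, and uses the discrete functionally open family $(f_k^{-1}(V):V\in\mathcal V_m)$ as the sfd witness. Where you genuinely diverge is the equality $\Sigma^{f*}_0(X,Y)=\mathrm{K}_1(X,Y)\cap\Sigma^f(X,Y)$: the paper simply cites Theorem~6 of the author's earlier work (Top.\ Appl.\ 189 (2015)), whereas you prove both inclusions from scratch. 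Your two ingredients --- the lemma that a strongly functionally discrete union of functionally closed sets is functionally closed (via the locally constant infimum $\inf_\alpha\phi_\alpha$), and the fusion of the sfd witnesses $U_i^n$ in $X$ with the discrete structure of $\mathcal V_m$ in $Y$ through the sets $G_n(V)$ --- are sound and make the theorem self-contained, at the cost of some length. The only point worth flagging is cosmetic: in the $G_n(V)$ construction you should discard empty members of the base $\mathcal B$ at the outset, since an empty $B_i^n$ is contained in every preimage $f^{-1}(V)$ and would otherwise put the same $U_i^n$ into $G_n(V)$ for several disjoint $V\in\mathcal V_m$, spoiling the disjointness of the index sets on which your discreteness claim rests; this is a one-line fix and does not affect the validity of the argument.
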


\begin{proof}
The equality $\Sigma^{f*}_0(X,Y)={\rm K}_1(X,Y)\cap\Sigma^f(X,Y)$ is proved in~\cite[Theorem 6]{Karlova:TA:2015}.

Let $f\in {\rm B}_1(X,Y)$ and $(f_n)_{n=1}^\infty$ be a sequence of continuous mappings $f_n:X\to Y$ such that $f(x)=\lim\limits_{n\to\infty} f_n(x)$ for every $x\in X$. Fix any $\sigma$-discrete open base $\mathcal V=\bigcup\limits_{m=1}^\infty \mathcal V_m$ of  $Y$, where each family  $\mathcal V_m$ is discrete. For every $V\in \mathcal V$ we take a sequence $(G_{k,V})_{k=1}^\infty$ of open sets such that $\overline{G_{k,V}}\subseteq G_{k+1,V}$ for every $k\in\mathbb N$ and $V=\bigcup\limits_{k=1}^\infty \overline{G_{k,V}}$. Observe that
  \begin{gather*}
  f^{-1}(V)=\bigcup\limits_{k=1}^\infty\bigcap\limits_{n=k}^{\infty} f_n^{-1}(\overline{G_{k,V}}).
  \end{gather*}
Denote $F_{k,V}=\bigcap\limits_{n=k}^{\infty} f_n^{-1}(\overline{G_{k,V}})$ and notice that every set $F_{k,V}$ is functionally closed in $X$. For all $k,m\in\mathbb N$ we put $\mathcal B_{k,m}=(F_{k,V}:V\in\mathcal V_m)$ and  $\mathcal B=\bigcup\limits_{k,m=1}^\infty\mathcal B_{k,m}$. Then $\mathcal B$ is a base for $f$. Moreover, every family $\mathcal B_{k,m}$ is strongly functionally discrete, since $F_{k,V}\subseteq f_k^{-1}(V)$ and the family $(f_k^{-1}(V):V\in\mathcal V_m)$ is discrete and consists of functionally open sets. Hence, $\mathcal B$ is a $\sigma$-sfd base for $f$. Thus, $f\in \Sigma^{f*}_0(X,Y)$.
\end{proof}

\begin{prop}\label{prop:H1=B1}
Let $X$ and $Y$ be topological spaces. Then
\begin{enumerate}
  \item ${\rm K}_1(X,Y)\subseteq {\rm K}_1^w(X,Y)$;

  \item ${\rm K}_1(X,Y)={\rm H}_1(X,Y)$ if $X$ is perfectly normal;

  \item ${\rm K}_1(X,Y)={\rm K}_1^w(X,Y)\subseteq {\rm H}_1(X,Y)$ if $Y$ is perfectly normal;

  \item ${\rm H}_1(X,Y)\subseteq{\rm K}_1^w(X,Y)$ if $X$ is normal;

  \item ${\rm H}_1(\upsilon X,Y)\subseteq {\rm K}_1^w(\upsilon X,Y)$ if $X$ is normal.
\end{enumerate}
\end{prop}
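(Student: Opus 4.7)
I would proceed in order of increasing difficulty. Parts (1)--(3) are definitional. (1) holds because functionally open subsets of $Y$ are open, so the defining condition of ${\rm K}_1$ is a priori stronger than that of ${\rm K}_1^w$. For (2), in a perfectly normal $X$ every closed set is a zero set, so $F_\sigma$ and functionally $F_\sigma$ coincide and ${\rm H}_1(X,Y)={\rm K}_1(X,Y)$. For (3), in a perfectly normal $Y$ every open set is functionally open, so the defining condition of ${\rm K}_1^w$ is in effect tested on all open sets; this yields ${\rm K}_1^w\subseteq {\rm K}_1$ and, combined with (1), equality, while ${\rm K}_1\subseteq {\rm H}_1$ is immediate from the fact that functionally $F_\sigma$ sets are $F_\sigma$.

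The substantial step is (4). Fix $f\in {\rm H}_1(X,Y)$ and a functionally open $V\subseteq Y$; pick $g\in C(Y,[0,1])$ with $V=\{g>0\}$, and form the nested families $U_n=\{g>1/n\}$ (open) and $Z_n=\{g\ge 1/n\}$ (zero sets in $Y$), so $U_n\subseteq Z_n\subseteq U_{n+1}$ and $V=\bigcup_n U_n=\bigcup_n Z_n$. The hypothesis $f\in{\rm H}_1$ gives $f^{-1}(U_n)=\bigcup_k F_{n,k}$ with $F_{n,k}$ closed in $X$, and (since $f^{-1}(Y\setminus Z_n)$ is $F_\sigma$) $f^{-1}(Z_n)=\bigcap_j G_{n,j}$ with $G_{n,j}$ open. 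Normality of $X$ now enters through Urysohn's lemma: for each triple $(n,k,j)$ separate the disjoint closed pair $F_{n,k}$ and $X\setminus G_{n,j}$ by $\phi_{n,k,j}\in C(X,[0,1])$. Setting
\[
H_{n,k}=\bigcap_{j=1}^{\infty}\phi_{n,k,j}^{-1}\bigl([0,\tfrac12]\bigr),
\]
each $H_{n,k}$ is a zero set of $X$ (a countable intersection of zero sets is again a zero set, realized via a uniformly convergent series of the $\phi_{n,k,j}$), and by construction $F_{n,k}\subseteq H_{n,k}\subseteq f^{-1}(Z_n)\subseteq f^{-1}(V)$. Hence $f^{-1}(V)=\bigcup_{n,k}H_{n,k}$ is functionally $F_\sigma$ in $X$.

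For (5) I would attempt to rerun (4) inside $\upsilon X$, exploiting that $X$ is dense and $C$-embedded in $\upsilon X$, so that continuous $[0,1]$-valued functions transfer uniquely between $X$ and $\upsilon X$. The main obstacle is that $\upsilon X$ itself need not be normal, so the Urysohn step of (4) cannot be applied directly in $\upsilon X$; it must instead be performed in the normal subspace $X$ and then lifted by $C$-extension to obtain candidate separating functions $\tilde\phi_{n,k,j}\in C(\upsilon X,[0,1])$. The most delicate point is verifying that the resulting lifted zero sets $\tilde H_{n,k}$ satisfy $F_{n,k}\subseteq\tilde H_{n,k}\subseteq f^{-1}(V)$ globally in $\upsilon X$ and not merely after restriction to $X$; this requires matching the $F_\sigma$-decomposition of $f^{-1}(V)$ in $\upsilon X$ with one produced from $f|_X$ via the $C$-embedding, and is the step I expect to be the hardest.
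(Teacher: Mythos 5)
Your parts (1)--(3) are fine and match what the paper dismisses as obvious. Your part (4) is correct and is essentially the paper's argument with the black box opened: the paper passes to the complementary functionally closed set $F=\varphi^{-1}(0)$, forms $A_n=f^{-1}(\varphi^{-1}([0,\frac1n)))$ and $B_n=X\setminus f^{-1}(\overline{G}_n)$, and invokes Lemma~1.7 of Vesel\'y to separate these disjoint $F_\sigma$-sets by a functionally $G_\delta$-set $C_n$, concluding $f^{-1}(F)=\bigcap_n C_n$; your explicit Urysohn construction of the zero sets $H_{n,k}$ is exactly a proof of that separation lemma, so the two arguments are the same up to complementation.

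Part (5), however, is a genuine gap: you outline a strategy and correctly name the hard step, but you do not carry it out, and the missing ingredient is a specific fact you never invoke. The paper's key tool is the $G_\delta$-density of $X$ in $\upsilon X$ (Blair--Hager): every nonempty $G_\delta$-subset, hence every nonempty $G_{\delta\sigma}$-subset, of $\upsilon X$ meets $X$. This is precisely what upgrades ``containment after restriction to $X$'' to containment in all of $\upsilon X$. Concretely, the paper sets $A=f^{-1}(F)$, $A_n=f^{-1}(G_{n+2})$, $B_n=\upsilon X\setminus f^{-1}(G_n)$, $C_n=\upsilon X\setminus f^{-1}(\overline{G}_{n+1})$; the deliberate index shift makes $A$ and $B_n$ into $G_\delta$-sets while $A_n$ and $C_n$ are $F_\sigma$-sets with $A\subseteq A_n$, $B_n\subseteq C_n$, $A_n\cap C_n=\emptyset$. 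One then separates $A_n\cap X$ from $C_n\cap X$ inside the normal space $X$ by a functionally $G_\delta$-set $D_{n,X}$, lifts it to a functionally $G_\delta$-set $D_n$ in $\upsilon X$ via $C$-embedding, and checks globality by contradiction: $D_n\cap B_n$ would be a nonempty $G_\delta$-set missing $X$, and $A\setminus D_n$ is a $G_{\delta\sigma}$-set missing $X$, so both are empty, giving $A\subseteq D_n\subseteq f^{-1}(G_n)$ and $f^{-1}(F)=\bigcap_n D_n$. Without the density principle (and without arranging the complexities of $A$, $B_n$, $A_n$, $C_n$ so that the exceptional sets are $G_{\delta\sigma}$), the step you flag as ``the hardest'' cannot be closed, so as written your (5) is a plan rather than a proof.
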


\begin{proof} Since (1), (2) and (3) are obvious, we prove (4) and (5).

Let $F$ be a functionally closed subset of $Y$ and $\varphi:Y\to [0,1]$ be a continuous function with $F=\varphi^{-1}(0)$. We  put $G_n=\varphi^{-1}([0,\frac 1n))$ for every $n\in\mathbb N$.

  (4) Let $f\in {\rm H}_1(X,Y)$. For every $n$ the sets  $A_n=f^{-1}(G_n)$ and $B_n=X\setminus f^{-1}(\overline{G}_n)$ are disjoint and  $F_\sigma$ in $X$. Then by~\cite[Lemma 1.7]{Vesely} for every  $n$ there exists a functionally $G_\delta$-set $C_n$ in $X$ such that $A_n\subseteq C_n$ and $C_n\cap B_n=\emptyset$. Notice that $f^{-1}(F)=\bigcap\limits_{n=1}^\infty C_n$. Hence,  $f^{-1}(F)$ is a functionally $G_\delta$-set in $X$. Therefore, $f\in {\rm K}_1^w(X,Y)$.

  (5) Let $f\in {\rm H}_1(\upsilon X,Y)$. Put
  \begin{gather*}
  A=f^{-1}(F), \,\,\,A_n=f^{-1}(G_{n+2}), \\
  B_n=\upsilon X\setminus f^{-1}(G_n), \,\,\, C_n=\upsilon X\setminus f^{-1}(\overline{G}_{n+1}).
    \end{gather*}
  Then
$A\subseteq A_n$, $B_n\subseteq C_n$, $A_n\cap C_n=\emptyset$, $A$ and $B_n$ are $G_\delta$-sets in  $\upsilon X$, $A_n$ and $C_n$ are $F_\sigma$-sets in $\upsilon X$. If $A=\emptyset$ or $\upsilon X\setminus A=\emptyset$, then $A$ is a functionally $G_\delta$-set. Thus, it is sufficient to consider the case of $A\ne\emptyset$ and $\upsilon X\setminus A\ne\emptyset$. Moreover, we may assume that $B_n\ne\emptyset$ for all $n\in\mathbb N$.
Since $X$ intersects every nonempty $G_\delta$-subset of $\upsilon X$ (and, consequently, every nonempty $G_{\delta\sigma}$-set) by~\cite[Corollary 2.6]{Blair-Hager},  $A_X=A\cap X\ne\emptyset$ and $B_{n,X}=B_n\cap X\ne\emptyset$. Then $A_{n,X}=A_n\cap X$ and $C_{n,X}=C_n\cap X$ are nonempty disjoint $F_\sigma$-subsets of the normal space  $X$. Applying~\cite[Lemma 1.7]{Vesely} for every $n$, we choose a functionally $G_\delta$-set $D_{n,X}$ in  $X$ such that $A_{n,X}\subseteq D_{n,X}$ and $D_{n,X}\cap C_{n,X}=\emptyset$. Since $X$ is $C$-embedded subspace of  $\upsilon X$, there exists a functionally $G_\delta$-set $D_n$ in $\upsilon X$ such that $D_n\cap X=D_{n,X}$.

Notice that $D_n\cap B_n=\emptyset$, since otherwise $D_n\cap B_n$ is nonempty  $G_\delta$-subset of  $\upsilon X$ which does not intersect  $X$, which contradicts to the $G_\delta$-density of  $X$ in $\upsilon X$. Notice that $A\setminus D_n$ is a $G_{\delta\sigma}$-set in $\upsilon X$ and $(A\setminus D_n)\cap X\subseteq A_{n,X}\setminus D_{n,X}=\emptyset$. Therefore, $A\subseteq D_n$.

Hence, $(D_n)_{n=1}^\infty$ is a sequence of functionally  $G_\delta$-subsets of $\upsilon X$ such that $A\subseteq D_n\subseteq f^{-1}(G_n)$ for every $n$. Then the set $f^{-1}(F)=\bigcap\limits_{n=1}^\infty D_n$ is functionally $G_\delta$ in $\upsilon X$. Therefore, $f\in {\rm K}_1^w(\upsilon X,Y)$.
\end{proof}

Let us observe that the space $X=\{(x_t)_{t\in [0,1]}\in \mathbb R^{[0,1]}:|\{t\in [0,1]: x_t\ne 0\}|\le\aleph_0\}$ is normal~\cite{Corson}, but its Hewitt-Nachbin completion $\upsilon X=\mathbb R^{[0,1]}$ is not normal~\cite{Stone}.

The following examples show that the conditions on $X$ and $Y$ in Proposition~\ref{prop:H1=B1} are essential.

\begin{exm}{\it There exists a Hausdorff compact space $Y$ such that
$$
{\rm K}_1^w(\mathbb R,Y)\setminus {\rm H}_1(\mathbb R,Y)\ne\emptyset.
$$}
\end{exm}

\begin{proof}
 Let $Y=\mathbb R\cup\{\infty\}$ be the Alexandroff one-point compactification of the real line $\mathbb R$ with the discrete topology and let $f:\mathbb R\to Y$ be a mapping such that $f(0)=\infty$ and $f(x)=x$ for all $x\in\mathbb R\setminus\{0\}$. Then the preimage $f^{-1}(F)$ of any functionally closed subset $F\subseteq Y$ is either finite, or has a countable complement, i.e. $f^{-1}(F)$ is $G_\delta$ in $\mathbb R$. Hence, $f\in{\rm K}_1^w(\mathbb R,Y)$. On the other hand, if $V$ is any non-Borel subset of $\mathbb R$, then $V$ is open in $Y$, but $f^{-1}(V)$ is not $F_\sigma$. Therefore, $f\not\in{\rm H}_1(\mathbb R,Y)$.
\end{proof}

\begin{exm} {\it There exists a completely regular space $X$ such that
$$
{\rm H}_1(X,\mathbb R)\setminus {\rm K}_1^w(X,\mathbb R)\ne\emptyset.
$$}
\end{exm}

\begin{proof}
       Let $X$ be the Niemytski plane, i.e. $X=\mathbb R\times [0,+\infty)$, where a base of neighborhoods of $(x,y)\in X$ with $y>0$ form open balls with the center in $(x,y)$, and a base of neighborhoods of $(x,0)$ form the sets $U\cup\{(x,0)\}$, where $U$ is an open ball which tangent to $\mathbb R\times \{0\}$ in the point $(x,0)$.  It is well-known that the space $X$ is perfect and completely regular,  but is not normal.

        Let $E$ be a set which is not of the $G_{\delta\sigma}$-type in $\mathbb R$ and consider the subspace $A=E\times \{0\}$ of $X$. Then $A$ is $F_\sigma$- and $G_\delta$-subset of $X$. We show that $A$ is not functionally $F_\sigma$ in $X$. Assume the contrary and write $A=\bigcup\limits_{n=1}^\infty A_n$, where each $A_n$ is a functionally closed subset of $X$.  For every $n\in\mathbb N$ we take a continuous function $f_n:X\to[0,1]$ such that $A_n=f_n^{-1}(0)$ and for all $(x,y)\in X$ and $m\in\mathbb N$ we put
        $$
        g_{n,m}(x,y)=\left\{\begin{array}{ll}
                              f_n(x,y), & y\ge\frac 1m,\\
                              f_n(x,\frac 1m),  & 0\le y<\frac 1m.
                            \end{array}
        \right.
        $$
        Then $g_{n,m}:\mathbb R\times [0,+\infty)\to [0,1]$ is a continuous function and $\lim\limits_{m\to\infty}g_{n,m}(x,y)=f_n(x,y)$ for every $(x,y)\in X$. Therefore, $f_n\in {\rm B}_1(\mathbb R\times [0,+\infty),[0,1])$ for every $n\in\mathbb N$, which implies that every set $A_n$ is a $G_\delta$-subset of $\mathbb R\times\{0\}$, a contradiction.

 It is easy to see that $\chi_A\in {\rm H}_1(X,\mathbb R)\setminus {\rm K}_1^w(X,\mathbb R)$.
\end{proof}

\section{Approximation Lemma}

\begin{dfn}{\rm
  A family $\mathcal A$ of subsets of topological space $X$ is called {\it strictly functionally discrete} if for every $A\in\mathcal A$ there exists a continuous function $f_A:X\to[0,1]$ such that $A\subseteq f_A^{-1}(0)$ and the family $(f_A^{-1}([0,1)):A\in\mathcal A)$ is discrete.}
\end{dfn}

Evidently, each strictly functionally discrete family is strongly functionally discrete and these two concepts coincide in the case when $\mathcal A$ consists of functionally closed sets.

For a family $\mathcal A=(A_i:i\in I)$ the set $\bigcup\limits_{i\in I}A_i$ is denoted by $\cup\mathcal A$.

\begin{prop}\label{prop:main_technical}
Let $X$ be a topological space, $(Y,d)$ be a metric space and \mbox{$f:X\to Y$} be a mapping. Assume that for some integer $n\ge 2$ there are strictly functionally discrete families $\mathcal F_0$, $\mathcal F_1$,\dots, $\mathcal F_n$ of subsets of $X$ and mappings $p_k:\mathcal F_k\to Y$ for $k\in\{0,\dots,n\}$ such that
\begin{enumerate}
  \item[(A)] the family $\mathcal F_0$ consists of a single set $X$;

  \item[(B)] for every $k\le n$ and $F\in\mathcal F_k$ we get $\sup\limits_{x\in F} d(f(x),p_k(F))<\frac{1}{2^{k+2}}$;

  \item[(C)] for every $k<n$ each set of $\mathcal F_{k+1}$ is contained in some set of $\mathcal F_k$;

  \item[(D)] for every $F\in\mathcal F_1$ the points $p_0(X)$ and $p_1(F)$ can be joined by an arc in $Y$;

  \item[(E)] for any $k\in\{1,\dots,n-1\}$ and sets $F\in\mathcal F_{k+1}$ and $F'\in\mathcal F_k$ with with $F\subseteq F'$ the points $p_{k+1}(F)$ and $p_k(F')$ can be joined by an arc of diameter $<\frac{1}{2^{k+2}}$ in $Y$.
\end{enumerate}
Then there exists a continuous mapping \mbox{$g:X\to Y$} such that for any $k\in\{1,\dots,n\}$ and $x\in \cup {\mathcal F}_k$ we get
\begin{gather}\label{prop:main_technical_ineq}
d(f(x),g(x))<\frac{1}{2^k}.
\end{gather}
\end{prop}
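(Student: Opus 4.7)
The plan is to build $g$ as the final term $g_n$ of an inductive sequence of continuous maps $g_0, g_1, \ldots, g_n \colon X \to Y$, where $g_k$ modifies $g_{k-1}$ only inside shrunken neighborhoods of the sets of $\mathcal{F}_k$ and transitions along the arcs furnished by (D) and (E). For each $F \in \mathcal{F}_k$ with $k \ge 1$, by (C) pick a parent $\pi(F) \in \mathcal{F}_{k-1}$ containing $F$, and fix a continuous parametrization $\gamma_{F,k} \colon [0,1] \to Y$ of the arc with $\gamma_{F,k}(0) = p_{k-1}(\pi(F))$ and $\gamma_{F,k}(1) = p_k(F)$; for $k \ge 2$ its image has diameter below $2^{-(k+1)}$ by (E).

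The technical preparation is to replace the witness $\varphi_{F,k} \colon X \to [0,1]$ of strict functional discreteness of $\mathcal{F}_k$ by a tighter one forcing nesting. Starting from $\tilde\varphi_{X,0} \equiv 0$, I would define recursively
\[
\tilde\varphi_{F,k}(x) = \max\bigl(\varphi_{F,k}(x),\, \min(1, 2\tilde\varphi_{\pi(F), k-1}(x))\bigr).
\]
A short induction yields $\tilde\varphi_{F,k}|_F \equiv 0$, that $\tilde U_{F,k} := \tilde\varphi_{F,k}^{-1}([0,1))$ is functionally open with the family $(\tilde U_{F,k})_{F \in \mathcal{F}_k}$ discrete, and crucially $\overline{\tilde U_{F,k}} \subseteq \{\tilde\varphi_{\pi(F), k-1} \le 1/2\}$. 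Set $\tilde\sigma_{F,k} = \min(1, 2(1 - \tilde\varphi_{F,k}))$; this is continuous, equal to $1$ on $\{\tilde\varphi_{F,k} \le 1/2\}$ (a closed neighborhood of $\overline F$), and to $0$ outside $\tilde U_{F,k}$.

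Put $g_0 \equiv p_0(X)$ and, using discreteness to ensure uniqueness of $F$, define
\[
g_k(x) = \begin{cases} \gamma_{F,k}(\tilde\sigma_{F,k}(x)), & x \in \tilde U_{F,k},\\ g_{k-1}(x), & x \notin \bigcup_{F \in \mathcal{F}_k} \tilde U_{F,k}.\end{cases}
\]
The hard step is continuity of $g_k$ at a boundary point $x_0 \in \partial \tilde U_{F,k}$: approaching from inside, $\tilde\sigma_{F,k}(x) \to 0$, so $g_k(x) \to \gamma_{F,k}(0) = p_{k-1}(\pi(F))$, while from outside $g_k(x) = g_{k-1}(x)$. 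The whole point of the nesting $\overline{\tilde U_{F,k}} \subseteq \{\tilde\varphi_{\pi(F), k-1} \le 1/2\}$ is that $\tilde\sigma_{\pi(F), k-1} = 1$ on this set, so by induction $g_{k-1} \equiv p_{k-1}(\pi(F))$ there; hence the two limits agree and $g_k$ is continuous.

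For the approximation estimate, if $x \in F \in \mathcal{F}_k$ then $\tilde\sigma_{F,k}(x) = 1$, so $g_k(x) = p_k(F)$ and $d(f(x), g_k(x)) < 2^{-(k+2)}$ by (B). Each later step $j > k$ can only displace the value along an arc at level $j$ of diameter $< 2^{-(j+1)}$, whence
\[
d(f(x), g(x)) < 2^{-(k+2)} + \sum_{j=k+1}^{n} 2^{-(j+1)} < 2^{-(k+2)} + 2^{-(k+1)} < 2^{-k},
\]
which is the required estimate~\eqref{prop:main_technical_ineq}.
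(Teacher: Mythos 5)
Your proposal is correct and follows essentially the same route as the paper's proof: shrink the functionally open witnesses so that the closure of each level-$k$ neighbourhood $\tilde U_{F,k}$ lies inside the set where the previous stage is locked at the value $p_{k-1}(\pi(F))$, interpolate along the arcs from (D) and (E), and conclude with the same telescoping estimate $2^{-(k+2)}+2^{-(k+1)}<2^{-k}$. The only cosmetic differences are that the paper uses one global transition function $h_k$ per level (separating $H_k$ from $E_k$) where you use the per-set functions $\tilde\sigma_{F,k}$, and that you give an explicit formula for the nested witnesses where the paper invokes their existence.
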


\begin{proof} For every $k\in\{1,\dots,n\}$ and  $F\in\mathcal F_k$ we take a continuous function $\psi_{F,k}:X\to [0,1]$ such that $F\subseteq \psi_{F,k}^{-1}(0)$ and the family $(\psi_{F,k}^{-1}([0,1)):F\in\mathcal F_k)$ is discrete. Put $\varphi_{F,1}=\psi_{F,1}$ for every $F\in\mathcal F_1$. Now for every $F\in\mathcal F_2$ we choose $F'\in \mathcal F_1$ such that $F\subseteq F'$ and set $U_{F,2}=\psi_{F,2}^{-1}([0,1))\cap\varphi_{F',1}^{-1}([0,\frac 12))$. There exists a continuous function $\varphi_{F,2}:X\to [0,1]$ such that $\varphi_{F,2}^{-1}(0)=\psi_{F,2}^{-1}(0)$ and $\varphi_{F,2}^{-1}([0,1))=U_{F,2}$.
Proceeding inductively in this way we obtain a sequence of families $(\varphi_{F,k}:F\in\mathcal F_k)_{1{\le}k{\le}n}$ of continuous functions $\varphi_{F,k}:X\to [0,1]$ such that for any $k\in\{2,\dots,n\}$ and $F\subseteq F_k$ there is $F'\in\mathcal F_{k-1}$ with $F\subseteq F'$ and
  \begin{equation}\label{prop:main_technical_cond4}
   F\subseteq \varphi_{F,k}^{-1}(0)\subseteq\varphi_{F,k}^{-1}([0,1))\subseteq \varphi_{F',k-1}^{-1}([0,\tfrac 12)),
  \end{equation}
and the family $(U_{F,k}:F\in\mathcal F_k)$ is discrete, where $U_{F,k}=\varphi_{F,k}^{-1}([0,1))$.
For every $k$ we put
$$
U_k=\bigcup\limits_{F\in \mathcal F_k} U_{F,k}
$$
and notice that the sets
\begin{gather*}
H_k=\bigcup\limits_{F\in \mathcal F_k} \varphi_{i,k}^{-1}([0,\tfrac 12])\quad \mbox{and}\quad E_k=X\setminus U_k
\end{gather*}
are disjoint and functionally closed in $X$. Hence, there exists a continuous function $h_k:X\to [0,1]$ such that $H_k=h_k^{-1}(1)$ and $E_k=h_k^{-1}(0)$.

For all $x\in X$ let
$$
g_0(x)=p_0(X).
$$
For every $F\in\mathcal F_1$ we take a continuous function $\gamma_{F,1}:[0,1]\to Y$ with $\gamma_{F,1}(0)=p_0(X)$ and $\gamma_{F,1}(1)=p_1(F)$.
Define $g_1:X\to Y$ by the formula
\begin{gather*}
g_1(x)=\left\{\begin{array}{ll}
                 g_0(x), & \mbox{if\,\,} x\in E_1, \\
                \gamma_{F,1}(h_1(x)),& \mbox{if\,\,} x\in \overline{U_{F,1}} \mbox{\,\, for some\,\,} F\in \mathcal F_1.
               \end{array}
  \right.
\end{gather*}
Let us observe that  the family $(U_{F,1}:F\in \mathcal F_1)$ is discrete and for every $x\in \overline{U_{F,1}}\setminus U_{F,1}$ with $F\in \mathcal F_1$ we have $h_{1}(x)=0$ and $g_1(x)=\gamma_{F,1}(0)=g_0(x)$. Moreover, $g_1(x)=p_1(F)$ for $x\in \varphi_{F,1}^{-1}([0,\frac 12])$, $F\in \mathcal F_1$. Therefore,  $g_1$ is well-defined continuous mapping.

Assume that we have already defined continuous mappings $g_1,\dots,g_k:X\to Y$ for $1\le k<n$ such that
$g_k(x)=g_{k-1}(x)$ if $x\in E_{k}$ and $g_k(x)=p_k(F)$ if $x\in \varphi_{F,k}^{-1}([0,\frac 12])$ for some $F\in \mathcal F_k$.
Using (E) for every $F\in\mathcal F_{k+1}$ we choose a set $F'\in\mathcal F_k$ with $F\subseteq F'$ and a continuous arc $\gamma_{F,k+1}:[0,1]\to Y$ of diameter $<\frac{1}{2^{k+2}}$ such that $\gamma_{F,k+1}(0)=p_k(F')$ and $\gamma_{F,k+1}(1)=p_{k+1}(F)$.
Define a continuous mapping $g_{k+1}:X\to Y$ by the formula
$$
g_{k+1}(x)=\left\{\begin{array}{ll}
                g_{k}(x), & \mbox{if\,\,\,}x\in E_{k+1}, \\
                \gamma_{F,k+1}(h_{k+1}(x)),& \mbox{if\,\,} x\in \overline{U_{F,k+1}} \mbox{\,\, for some\,\,} F\in \mathcal F_{k+1}.
              \end{array}
\right.
$$
We prove that  the inequality
\begin{gather}\label{c3}
  d(g_{k+1}(x),g_k(x))<\frac{1}{2^{k+2}}
\end{gather}
holds for every $x\in X$. Clearly, (\ref{c3}) is valid for  $x\in E_{k+1}$. If $x\in U_{F,k+1}$ for $F\in \mathcal F_{k+1}$, then  $x\in \varphi_{F',k}^{-1}([0,\frac 12))$  and
$$
g_{k+1}(x)=\gamma_{F,k+1}(h_{k+1}(x)),\quad g_k(x)=p_k(F')=\gamma_{F,k+1}(0),
$$
which implies~(\ref{c3}).

Proceeding inductively we define continuous mappings  $g_1,\dots,g_n:X\to Y$ which satisfy~(\ref{c3}) for all $k\in\{1,\dots,n-1\}$ and $x\in X$. Moreover, $g_k|_{F}=p_k(F)$ for every $k\in\{1,\dots,n\}$ and $F\in\mathcal F_k$.

We denote $g=g_n$. Let $1\le k\le n$ and $x\in \cup\mathcal F_k$. Then $x\in F$ for some $F\in \mathcal F_k$ and $g_k(x)=p_k(F)$. Taking into consideration~(B), we get
$$
d(f(x),g_k(x))\le \frac{1}{2^{k+2}}.
$$
Then~(\ref{c3}) implies that
\begin{gather*}
  d(f(x),g(x))\le d(f(x),g_k(x))+\sum\limits_{i=k}^{n-1} d(g_i(x),g_{i+1}(x))<\frac{1}{2^{k+2}}+\frac{1}{2^{k+1}}<\frac{1}{2^k},
\end{gather*}
which completes the proof.
\end{proof}

\section{A generalization of the Lebesgue-Hausdorff Theorem}

\begin{thm}\label{Sigma_is_BaireOne}
   Let $X$ be a topological space and $Y$ be a metrizable connected and locally arcwise connected space. Then
   $$
   \Sigma_0^{f*}(X,Y)\subseteq {\rm B}_1(X,Y).
   $$
\end{thm}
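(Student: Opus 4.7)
My plan is to produce, for each $n\in\mathbb{N}$, a continuous map $g_n:X\to Y$ by one application of Proposition~\ref{prop:main_technical}, and then verify $g_n\to f$ pointwise.

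Fix a bounded metric $d$ on $Y$ and a basepoint $y_0\in Y$; since $Y$ is connected and locally arcwise connected it is arcwise connected. For each $k\ge 1$ I choose an open cover $\mathcal V_k$ of $Y$ by arcwise connected sets of diameter less than $1/2^{k+3}$, arranged so that $\mathcal V_{k+1}$ refines $\mathcal V_k$ with an assigned parent $\rho(V')\in\mathcal V_k$ containing each $V'\in\mathcal V_{k+1}$. Let $\mathcal B=\bigcup_m\mathcal B_m$ be a $\sigma$-sfd base of $f$ consisting of functionally closed sets. For each pair $(k,m)$ put
\[
\mathcal C_{k,m}=\{B\in\mathcal B_m: B\subseteq f^{-1}(V) \text{ for some } V=V(B)\in\mathcal V_k\}.
\]
Each $\mathcal C_{k,m}$ is sfd (as a subfamily of $\mathcal B_m$), the union $\mathcal C_k:=\bigcup_m\mathcal C_{k,m}$ covers $X$, and $\operatorname{diam} f(B)<1/2^{k+3}$ for every $B\in\mathcal C_k$.

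For each $n$ I will build strictly functionally discrete families $\mathcal F_0^{(n)},\dots,\mathcal F_n^{(n)}$ satisfying (A)--(E), with the additional property that the sets $E_k^{(n)}:=\cup\mathcal F_k^{(n)}$ are nondecreasing in $n$ and $\bigcup_n E_k^{(n)}=X$ for every $k$. The base case is $\mathcal F_0^{(n)}=\{X\}$, $p_0(X)=y_0$. At level $k\ge 1$ I assemble $\mathcal F_k^{(n)}$ from $\mathcal C_{k,1},\dots,\mathcal C_{k,n}$ by a peeling disjointification: for $m=1,\dots,n$ and each $B\in\mathcal C_{k,m}$, keep the functionally closed piece $B\cap(X\setminus W_{k,<m})$, where $W_{k,<m}$ is the union of the discrete functionally open neighborhoods witnessing the sfd property of $\mathcal C_{k,1},\dots,\mathcal C_{k,m-1}$; such a discrete union of cozero sets is itself cozero, so the piece remains functionally closed. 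A further intersection with the parent set in $\mathcal F_{k-1}^{(n)}$ enforces (C). Choose $p_k^{(n)}(F)\in f(F)\subseteq V(B)$; then (B) follows from the diameter bound. For (D) and (E), whenever $F\subseteq F'$ the parent relation among the $V(B)$'s places both $p_{k+1}^{(n)}(F)$ and $p_k^{(n)}(F')$ inside a single arcwise connected $V\in\mathcal V_k$ of diameter less than $1/2^{k+3}<1/2^{k+2}$.

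Proposition~\ref{prop:main_technical} then yields continuous $g_n:X\to Y$ with $d(f(x),g_n(x))<1/2^k$ whenever $x\in E_k^{(n)}$. For every $x\in X$ and every $k$ there is $N(x,k)$ such that $x\in E_k^{(n)}$ for all $n\ge N(x,k)$; hence $\limsup_n d(f(x),g_n(x))\le 1/2^k$ for every $k$, which forces $g_n(x)\to f(x)$. Thus $f\in {\rm B}_1(X,Y)$. The main difficulty is the peeling step: converting $n$ overlapping sfd layers $\mathcal C_{k,1},\dots,\mathcal C_{k,n}$ into a single sfd family $\mathcal F_k^{(n)}$ whose members remain functionally closed, whose unions $E_k^{(n)}$ monotonically exhaust $X$, while still admitting refinement by $\mathcal F_{k-1}^{(n)}$ and arc-joinable $p_k^{(n)}$-values for (D), (E). The functionally closed/open framework of $\Sigma_0^{f*}(X,Y)$ is precisely tailored for this peeling, since complements of functionally open sets are functionally closed and discrete unions of cozero sets remain cozero.
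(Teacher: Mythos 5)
Your overall architecture is the same as the paper's: sieve the $\sigma$-sfd base through fine arcwise connected covers of $Y$, build for each $n$ a tower of sfd families of functionally closed sets satisfying (A)--(E), apply Proposition~\ref{prop:main_technical} to get $g_n$, and let $n\to\infty$. But the step you yourself flag as the main difficulty --- turning the $\sigma$-sfd family $\mathcal C_k=\bigcup_m\mathcal C_{k,m}$ into single sfd families whose unions increase to $X$ --- is precisely where the paper does \emph{not} argue inline: it invokes Lemma~13 of \cite{Karlova:TA:2015}. Your peeling does not deliver what that lemma provides, for two concrete reasons.

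First, the peeled pieces need not cover $\bigcup_{m\le n}\cup\mathcal C_{k,m}$, so the exhaustion $\bigcup_n E_k^{(n)}=X$ fails. If $x\in U_{B'}\setminus B'$ for a witness $U_{B'}$ of some $B'\in\mathcal C_{k,1}$, while the only members of $\mathcal C_k$ actually containing $x$ lie in layers $m\ge 2$, then $x\in W_{k,<m}$ for every $m\ge2$ and $x$ is deleted from every set that contains it; such a point lies in no $E_k^{(n)}$, and the pointwise convergence argument breaks there. Second, even where pieces survive, $\mathcal F_k^{(n)}$ is not shown to be strongly (or strictly) functionally discrete: a layer-$m$ piece $B\cap(X\setminus W_{k,<m})$ can contain points of the boundary of $W_{k,<m}$, so \emph{every} open set containing it meets the earlier witnesses; since a discrete family of open sets is necessarily pairwise disjoint, no witnesses of the naive form $U_B\cap(\cdots)$ can make the union of the $n$ layers discrete. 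Your one-line justification (``a discrete union of cozero sets is cozero'') only gives functional closedness of the pieces, not discreteness of the combined family nor exhaustion of $X$. Both defects are repairable --- by peeling with interpolated level sets of the functions defining the witnesses and keeping a whole increasing sequence of sfd refinements for each $k$ --- but that is exactly the content of the cited Lemma~13, so this step needs either that citation or a genuine proof. The remaining parts of your argument (choice of the points $p_k$, verification of (B), (D), (E) via the diameter bounds, and the passage to the limit) are correct and match the paper.
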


\begin{proof} Let $d$ be a metric on $Y$ which generates its topological structure. Using locally arcwise connectedness of $Y$, for every $k\in\mathbb N$ we choose a covering $\mathcal U_k$ of $Y$ by open balls of diameters $<\frac{1}{2^{k+2}}$ such that any two points in any set $U\in\mathcal U_k$
can be joined by an arc of diameter $<\frac{1}{2^{k+2}}$.

Let  $f\in \Sigma_0^{f*}(X,Y)$ and let $\mathcal B$ be a $\sigma$-sfd base for $f$ which consists of functionally closed sets in $X$. For every  $k\in\mathbb N$ we put
$$
\mathcal B_k=(B\in\mathcal B: B\subseteq f^{-1}(U)\,\,\,\mbox{for some}\,\,\,U\in \mathcal U_k).
$$
Then $\mathcal B_k$ is a $\sigma$-sfd family and $X=\cup \mathcal B_k$ for every $k$. According to  \cite[Lemma 13]{Karlova:TA:2015}  for every $k\in\mathbb N$ there exists a sequence  $(\mathcal B_{k,n})_{n=1}^\infty$ of sfd families $\mathcal B_{k,n}=(B_{k,n,i}:i\in I_{k,n})$ of functionally closed subsets of $X$ such that  $\mathcal B_{k,n}\prec \mathcal B_k$, $\mathcal B_{k,n}\prec\mathcal B_{k,n+1}$ for every $n\in\mathbb N$ and $\bigcup\limits_{n=1}^\infty \bigcup\mathcal B_{k,n}=X$. For all $k,n\in\mathbb N$ we put
$$
\mathcal F_{k,n}=(B_{1,n,i_1}\cap\dots\cap B_{k,n,i_k}: i_m\in I_{m,n}, 1\le m\le k)
$$
and notice that every $\mathcal F_{k,n}$ is strongly functionally discrete family of functionally closed sets with
\begin{enumerate}
  \item[(a)] $\mathcal F_{k+1,n}\prec \mathcal F_{k,n}$,

  \item[(b)] $\mathcal F_{k,n}\prec \mathcal F_{k,n+1}$,

  \item[(c)]  $\bigcup\limits_{n=1}^\infty \bigcup\mathcal F_{k,n}=X$.
\end{enumerate}

Fix  $n\ge 2$ and for every $k=1,\dots,n$ we denote $\mathcal F_{k}=\mathcal F_{k,n}$. Take any $y_0\in Y$ and put $p_0(X)=y_0$. For every $F\in\mathcal F_1$ we take $U_F\in\mathcal U_1$ with $f(F)\subseteq U_F$ and any $y_F\in U_F$. Put $p_1(F)=y_F$. Notice that $Y$ is arcwise connected, therefore, the condition (D) of Proposition~\ref{prop:main_technical} holds.

Now for every $F\in \mathcal F_2$ there exist $F'\in \mathcal F_1$ with  $f(F)\subseteq f(F')$ and $U_F\in\mathcal U_2$ with $f(F)\subseteq U_F$. Take any point $y_{F}\in U_{F}\cap U_{F'}$ and put $p_2(F)=y_F$. Notice that we can join $p_2(F)$ and $p_1(F')$ by an arc of diameter $<\frac{1}{2^4}$. Proceeding inductively we obtain mappings $p_k:\mathcal F_k\to Y$ which for every $k\in\{0,\dots,n\}$ satisfy the conditions of Proposition~\ref{prop:main_technical}. Hence, there exists a continuous mapping  $g_n:X\to Y$ such that $d(f(x),g_n(x))<\frac{1}{2^k}$ if $x\in \cup\mathcal F_{k,n}$ for $k\le n$.

We show that the sequence $(g_n)_{n=2}^\infty$ converges pointwise to $f$ on $X$. Let $x\in X$, $\varepsilon>0$ and $k\in\mathbb N$ be such that $\frac{1}{2^k}<\varepsilon$.  Conditions~(b) and (c) imply that there exists  $n_0\ge k$ such that  $x\in \mathcal F_{k,n}$ for all  $n\ge n_0$. Then we have
$$
d(f(x),g_n(x))<\frac{1}{2^k}<\varepsilon
$$
for all  $n\ge n_0$. Hence, $f\in {\rm B}_1(X,Y)$.
\end{proof}

Combining Theorems~\ref{BaireOne_is_SFD} and~\ref{Sigma_is_BaireOne} we get the following result.
\begin{thm}\label{th:EqualityB1SFD}
Let $X$ be a topological space and $Y$ be a metrizable connected and locally arcwise connected space. Then
  $$
  {\rm K}_1(X,Y)\cap\Sigma^f(X,Y)={\rm B}_1(X,Y).
  $$
\end{thm}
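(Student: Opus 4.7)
The plan is to recognize that the stated equality is an immediate corollary of the two preceding theorems, so the ``proof'' reduces to concatenating their conclusions and checking that the hypotheses line up. No fresh technical machinery needs to be developed here; all the substantive content has already been installed in Theorems~\ref{BaireOne_is_SFD} and~\ref{Sigma_is_BaireOne}.

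First I would invoke Theorem~\ref{BaireOne_is_SFD}. Its hypotheses are that $X$ is an arbitrary topological space and $Y$ is metrizable, both of which are present. This yields simultaneously the inclusion
\[
{\rm B}_1(X,Y)\subseteq\Sigma_0^{f*}(X,Y)
\]
and the identification
\[
\Sigma_0^{f*}(X,Y)={\rm K}_1(X,Y)\cap\Sigma^f(X,Y),
\]
so in particular we already have ${\rm B}_1(X,Y)\subseteq{\rm K}_1(X,Y)\cap\Sigma^f(X,Y)$ without using connectedness or local arcwise connectedness of $Y$.

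Next I would invoke Theorem~\ref{Sigma_is_BaireOne}, whose hypotheses add exactly the conditions imposed here, namely that $Y$ is connected and locally arcwise connected (in addition to being metrizable). That theorem supplies the reverse inclusion
\[
\Sigma_0^{f*}(X,Y)\subseteq{\rm B}_1(X,Y).
\]
Chaining the three relations gives
\[
{\rm B}_1(X,Y)\subseteq{\rm K}_1(X,Y)\cap\Sigma^f(X,Y)=\Sigma_0^{f*}(X,Y)\subseteq{\rm B}_1(X,Y),
\]
so every containment is an equality and the theorem follows.

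There is no real obstacle at this stage; the only point worth verifying is the hypothesis alignment. The metrizability assumption on $Y$ is used by both theorems being combined; the arcwise local connectedness and connectedness of $Y$ are needed only for the $\supseteq$ direction (i.e.\ for Theorem~\ref{Sigma_is_BaireOne}), since without these one cannot produce the small-diameter arcs $\gamma_{F,k+1}$ required by the Approximation Lemma. Since all of these conditions are in force here, the combination goes through verbatim.
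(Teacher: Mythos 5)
Your proposal is correct and matches the paper exactly: the paper derives this theorem by combining Theorem~\ref{BaireOne_is_SFD} (which gives ${\rm B}_1(X,Y)\subseteq\Sigma_0^{f*}(X,Y)={\rm K}_1(X,Y)\cap\Sigma^f(X,Y)$ for metrizable $Y$) with Theorem~\ref{Sigma_is_BaireOne} (which gives $\Sigma_0^{f*}(X,Y)\subseteq{\rm B}_1(X,Y)$ under the added connectedness hypotheses). Your hypothesis-alignment check is also accurate.
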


\begin{rmk}
  {\rm If $X$ is a normal space, then Proposition~\ref{prop:H1=B1} and Theorem~\ref{th:EqualityB1SFD} imply Theorem~\ref{th:Vesely}.}
\end{rmk}

For a product $Y=\prod\limits_{n=1}^\infty Y_n$ of topological spaces we denote by $\pi_n$ a projection mapping $\pi_n:Y\to Y_n$, $\pi_n(y)=y_n$ for every $y=(y_n)_{n=1}^\infty\in Y$.

\begin{cor}\label{cor:prod}
  Let $X$ be a topological space, $(Y_n)_{n=1}^\infty$ be a sequence of metrizable connected and locally arcwise connected spaces and let $Y\subseteq \prod\limits_{n=1}^\infty Y_n$ be a subspace such that for every $n\in\mathbb N$ there exists a continuous mapping $h_n:\prod\limits_{k=1}^n Y_k\to Y$ with $\pi_n(h_n(y))=y$ for every $y\in \prod\limits_{k=1}^n Y_k$. Then
  $$
  {\rm K}_1(X,Y)\cap\Sigma^f(X,Y)={\rm B}_1(X,Y).
  $$
\end{cor}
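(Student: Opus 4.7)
The plan is to embed the problem into the full product $\widetilde Y=\prod_{k=1}^\infty Y_k$, where Theorem~\ref{th:EqualityB1SFD} applies directly, and then pull continuous approximations back into $Y$ via the continuous sections $h_n$.

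Since $Y$ is metrizable as a subspace of the countable metrizable product $\widetilde Y$, the inclusion ${\rm B}_1(X,Y)\subseteq{\rm K}_1(X,Y)\cap\Sigma^f(X,Y)$ is immediate from Theorem~\ref{BaireOne_is_SFD}. For the converse, fix $f\in{\rm K}_1(X,Y)\cap\Sigma^f(X,Y)$ and let $\iota:Y\hookrightarrow\widetilde Y$ denote the inclusion. For any open $V\subseteq\widetilde Y$ the set $V\cap Y$ is open in $Y$, so $(\iota\circ f)^{-1}(V)=f^{-1}(V\cap Y)$ is functionally $F_\sigma$ in $X$ and is a union of members of any $\sigma$-sfd base of $f$. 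Hence $\iota\circ f\in{\rm K}_1(X,\widetilde Y)\cap\Sigma^f(X,\widetilde Y)$.

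Next I would verify that $\widetilde Y$ is metrizable, connected, and locally arcwise connected: metrizability is standard for countable products; connectedness and arcwise connectedness pass to arbitrary products via componentwise arcs; and local arcwise connectedness follows because any basic open set in $\widetilde Y$ restricts only finitely many coordinates, each of which admits a locally arcwise connected refinement in $Y_k$, while the tail factors are already arcwise connected. Theorem~\ref{th:EqualityB1SFD} then yields $\iota\circ f\in{\rm B}_1(X,\widetilde Y)$, so there is a sequence of continuous mappings $\phi_n:X\to\widetilde Y$ with $\phi_n(x)\to f(x)$ pointwise.

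Let $\sigma_n:\widetilde Y\to\prod_{k=1}^n Y_k$ be the projection onto the first $n$ coordinates and put $\Phi_n=h_n\circ\sigma_n\circ\phi_n:X\to Y$. Each $\Phi_n$ is continuous, and the section property forces the first $n$ coordinates of $\Phi_n(x)$ to equal those of $\phi_n(x)$. Thus for any fixed coordinate $k$ and all $n\geq k$ we have $\pi_k(\Phi_n(x))=\pi_k(\phi_n(x))\to\pi_k(f(x))$, so $\Phi_n(x)\to f(x)$ in $\widetilde Y$ and hence in the subspace topology of $Y$. This witnesses $f\in{\rm B}_1(X,Y)$.

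The main temptation to resist is the naive alternative of setting $f_n=\sigma_n\circ f$, proving $f_n\in{\rm B}_1$ and composing with $h_n$: that produces approximations converging to $f$ only uniformly, and uniform limits of Baire-one maps into a general metric target need not be Baire-one. The crucial idea is to approximate $\iota\circ f$ first in the ambient product, where a single sequence $(\phi_n)$ already converges pointwise, and only then bend the image into $Y$; the section property pins down arbitrarily many coordinates exactly and makes any diagonal selection unnecessary.
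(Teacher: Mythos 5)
Your proof is correct, but it reaches the approximating sequence by a different route than the paper. The paper applies Theorem~\ref{th:EqualityB1SFD} to each single factor, obtaining for every $n$ a sequence of continuous maps $f_{nm}:X\to Y_n$ with $f_{nm}\to\pi_n\circ f$ pointwise, and then lifts the bundled $n$-th approximants of the first $n$ coordinates through $h_n$; this keeps the hypotheses of Theorem~\ref{th:EqualityB1SFD} applied only to the given spaces $Y_n$ and costs nothing beyond bookkeeping. You instead apply Theorem~\ref{th:EqualityB1SFD} once to the full product $\widetilde Y=\prod_{k}Y_k$, which buys you a single pointwise-convergent sequence $\phi_n$ and makes the coordinatewise convergence of $\Phi_n=h_n\circ\sigma_n\circ\phi_n$ immediate, at the price of having to verify that a countable product of metrizable connected, locally arcwise connected spaces is again connected and locally arcwise connected (true, and your sketch of it is adequate, using that each $Y_k$ is arcwise connected and that path-connected metrizable spaces are arc-connected). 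Both arguments hinge on the same reading of the hypothesis, namely that $h_n$ is a continuous section of the projection of $Y$ onto the first $n$ coordinates, and both conclude by observing that for each fixed $k$ the $k$-th coordinate of the lifted maps eventually agrees with that of the approximants. One side remark: your closing caveat slightly misdiagnoses the pitfall of the ``naive alternative'' --- the danger in approximating each $\sigma_n\circ f$ separately is not uniform convergence but that an arbitrary diagonal of a doubly indexed pointwise-convergent array need not converge; the paper sidesteps this by taking, for each coordinate $k$, the $n$-th term of the \emph{fixed} sequence approximating $\pi_k\circ f$, which does converge as $n\to\infty$.
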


\begin{proof}
We only need to prove the inclusion $\Sigma_0^{f*}(X,Y)\subseteq {\rm B}_1(X,Y)$. Let $f\in \Sigma_0^{f*}(X,Y)$. Then $\pi_n\circ f\in B_1(X,Y_n)$ by Theorem~\ref{th:EqualityB1SFD} for every $n$. Therefore, there exists a sequence of continuous mappings $f_{nm}:X\to Y_n$ such that $\lim\limits_{m\to\infty}f_{nm}(x)=\pi_n(f(x))$ for all $x\in X$ and $n\in\mathbb N$. We put $f_n=h_n\circ f_{nn}$. Then $(f_n)_{n=1}^\infty$ is a sequence of continuous mappings $f_n:X\to Y$ which converges to $f$ pointwise on $X$.
\end{proof}

\begin{rmk}
  {\rm Let $Y=\bigcup\limits_{n=1}^\infty \{(y_n)_{n\in\mathbb N}\in[0,1]^\omega: y_n=1 \,\,\,\mbox{and}\,\,\, \forall k>n \,\,\, y_k=0\}$. Then $Y$ is neither arcwise connected nor locally arcwise connected, but the equality ${\rm K}_1(X,Y)={\rm B}_1(X,Y)$ holds for any space $X$ by Corollary~\ref{cor:prod}.}
\end{rmk}

\begin{dfn}
  {\rm A topological space $X$ is called
  \begin{itemize}
    \item   {\it almost arcwise connected} if each pair of nonempty open subsets of $X$ can be joined by an arc in $X$;

    \item  {\it locally almost arcwise connected at $x\in X$} if for any neighborhood $V$ of $x$ there exists a neighborhood $U\subseteq V$ of $x$ such that each pair of nonempty open subsets of $U$ can be joined by an arc in $\overline{V}$;

    \item {\it locally almost arcwise connected} if it is locally almost arcwise connected at every point.
  \end{itemize} }
\end{dfn}

\begin{lem}\label{lemma:aac}
 Let $Y$ be a topological space such that ${\rm H}_1([0,1],Y)\subseteq {\rm B}_1([0,1],Y)$. Then $Y$ is almost arcwise connected.
\end{lem}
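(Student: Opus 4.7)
The plan is, given a pair of nonempty open sets $V_1, V_2 \subseteq Y$, to build a very simple step function in ${\rm H}_1([0,1], Y)$ whose values at $0$ and $1$ lie in $V_1$ and $V_2$ respectively, and then exploit the hypothesis to extract a continuous approximation whose image contains the required arc.

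First I would pick $y_1 \in V_1$ and $y_2 \in V_2$, chosen distinct whenever possible, and define $f \colon [0,1] \to Y$ by $f(t) = y_1$ for $t < 1/2$ and $f(t) = y_2$ for $t \geq 1/2$. Verifying $f \in {\rm H}_1([0,1],Y)$ is routine: for any open $W \subseteq Y$ the preimage $f^{-1}(W)$ is one of $\emptyset,\ [0,1/2),\ [1/2,1],\ [0,1]$, each of which is $F_\sigma$ in $[0,1]$.

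By the hypothesis, there exists a sequence of continuous mappings $f_n \colon [0,1] \to Y$ with $f_n \to f$ pointwise. Since $V_1$ is an open neighbourhood of $f(0) = y_1$, the pointwise convergence forces $f_n(0) \in V_1$ for all sufficiently large $n$, and analogously $f_n(1) \in V_2$. Fixing such an $n$, the map $f_n$ is a continuous path in $Y$ from a point of $V_1$ to a point of $V_2$.

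To convert this path into an arc I would invoke the classical fact that every continuous image of $[0,1]$ in a Hausdorff space joining two distinct points $a$ and $b$ contains a sub-arc with endpoints $a$ and $b$ (cf.\ Kuratowski, \emph{Topology} Vol.~II). I expect the main subtlety to lie precisely in this final step, since it tacitly relies on a separation hypothesis on $Y$ that is not explicit in the statement of the lemma; the degenerate situation where $V_1 \cap V_2 \neq \emptyset$ forces $y_1 = y_2$ (i.e.\ $V_1 = V_2$ is an open singleton) also needs a brief separate remark, though it does not arise in the intended applications where $Y$ is metrizable.
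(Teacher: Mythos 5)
Your proof is correct and follows essentially the same route as the paper: the paper takes distinct points $u\in U$, $v\in V$, sets $f(x)=u$ on $[0,1)$ and $f(1)=v$, checks $f\in{\rm H}_1([0,1],Y)$, and picks $n$ with $f_n(0)\in U$ and $f_n(1)\in V$, concluding directly. Your closing remarks about upgrading the continuous path $f_n$ to a genuine arc (which needs a separation hypothesis) and about the degenerate case of an open singleton address points the paper silently glosses over, and are the only real difference.
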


\begin{proof}
Consider nonempty open sets $U$ and  $V$ in $Y$ and take different points $u\in U$ and $v\in V$. Let $f(x)=u$ if $x\in [0,1)$ and $f(1)=v$. Then   $f\in{\rm H}_1([0,1],Y)$. By the assumption of the theorem there exists a sequence of continuous mappings  $f_n:[0,1]\to Y$ which is pointwise convergent to $f$. Then there exists a number $n$ such that $f_n(0)\in U$ and $f_n(1)\in V$, which implies almost arcwise connectedness of  $Y$.
\end{proof}

\begin{lem}\label{lemma:loc_aac}
 Let $Y$ be a first-countable topological space such that ${\rm H}_1([0,1],Y)\subseteq {\rm B}_1([0,1],Y)$. Then $Y$ is locally almost arcwise connected.
\end{lem}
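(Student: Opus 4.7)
The plan is to argue by contradiction, imitating the construction in Lemma~\ref{lemma:aac} but using a Baire category argument on $[0,1]$ to force the continuous approximants to stay inside $\overline{V}$. Suppose some point $x_0\in Y$ witnesses the failure of local almost arcwise connectedness: there is a neighborhood $V$ of $x_0$ such that for every neighborhood $U\subseteq V$ of $x_0$ one can find nonempty open sets $U^1,U^2\subseteq U$ that cannot be joined by an arc in $\overline{V}$. First-countability yields a decreasing open base $(V_n)_{n=1}^\infty$ at $x_0$ with $V_1\subseteq V$; for each $n$ pick nonempty open $U_n^1,U_n^2\subseteq V_n$ not joinable by an arc in $\overline{V}$, and choose $u_n\in U_n^1$ and $v_n\in U_n^2$. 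Then $u_n\to x_0$ and $v_n\to x_0$.

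Next, fix a family $\{(a_n,b_n):n\in\mathbb N\}$ with $a_n<b_n$ and pairwise disjoint intervals $[a_n,b_n]\subseteq(0,1)$, arranged so that for every nonempty open interval $I\subseteq[0,1]$ infinitely many $n$ satisfy $[a_n,b_n]\subseteq I$ (a standard dovetailing does this). Define $f\colon[0,1]\to Y$ by $f(a_n)=u_n$, $f(b_n)=v_n$, and $f(t)=x_0$ otherwise. I would check that $f\in{\rm H}_1([0,1],Y)$: for open $W\subseteq Y$, if $x_0\in W$ then $W\supseteq V_N$ for some $N$, so the complement of $f^{-1}(W)$ is finite; if $x_0\notin W$, then $f^{-1}(W)$ is countable. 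Either way $f^{-1}(W)$ is $F_\sigma$. The hypothesis then provides continuous mappings $g_m\colon[0,1]\to Y$ with $g_m\to f$ pointwise.

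The main obstacle, and what distinguishes this lemma from the previous one, is ensuring the arc extracted from some $g_m$ lies inside $\overline{V}$ rather than escaping. I would handle this via Baire category. Set $F_m=g_m^{-1}(\overline{V})$, which is closed. Every value of $f$ lies in $V_1\subseteq V$ and $V$ is open, so for each $t$ the relation $g_m(t)\in V\subseteq\overline{V}$ holds for all sufficiently large $m$; hence $[0,1]=\bigcup_{k=1}^\infty\bigcap_{m\ge k}F_m$ is a countable union of closed sets. Baire's theorem gives $k_0$ and a nonempty open interval $I\subseteq[0,1]$ with $g_m(I)\subseteq\overline{V}$ for every $m\ge k_0$; closedness of $\overline{V}$ and continuity of $g_m$ then yield $g_m(\overline{I})\subseteq\overline{V}$ for all $m\ge k_0$. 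Choose $n$ with $[a_n,b_n]\subseteq I$, and then, using $g_m(a_n)\to u_n\in U_n^1$ and $g_m(b_n)\to v_n\in U_n^2$ together with openness of $U_n^1,U_n^2$, choose $m\ge k_0$ with $g_m(a_n)\in U_n^1$ and $g_m(b_n)\in U_n^2$. Then $g_m|_{[a_n,b_n]}$ is a continuous arc inside $\overline{V}$ joining a point of $U_n^1$ to a point of $U_n^2$, contradicting the choice of the pair $(U_n^1,U_n^2)$.
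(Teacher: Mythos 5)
Your proof is correct and follows essentially the same route as the paper's: the same counterexample function (equal to $x_0$ off a countable set, sending the endpoints of a dense family of intervals into the shrinking pairs $U_n^1,U_n^2$), verified to be in ${\rm H}_1$ the same way, followed by a Baire category argument on $[0,1]$ applied to the approximating sequence. The only difference is cosmetic: you phrase the Baire step positively (writing $[0,1]$ as a countable union of the closed sets $\bigcap_{m\ge k}g_m^{-1}(\overline V)$ and extracting an interval on which all late approximants stay in $\overline V$), whereas the paper phrases it dually (the open sets $G_m=\bigcup_{k\ge m}f_k^{-1}(Y\setminus\overline V)$ are dense yet have empty intersection).
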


\begin{proof}
Suppose to the contrary that there exists a point $y_0\in Y$ and its neighborhood $V$ such that  the condition of locally almost arcwise connectedness at $y_0$ is not satisfied. Let $(V_n:n\in\mathbb N)$ be  a family of all basic neighborhoods of  $y_0$ which are contained in $V$. For every $n\in\mathbb N$ we take nonempty open sets $U_{n,0}$ and $U_{n,1}$ in $V_n$ such that any arc starting in $U_{n,0}$ and ending in $U_{n,1}$ necessarily intersects  $Y\setminus \overline{V}$. For all $n\in\mathbb N$ and $i=0,1$ fix any point $y_{n,i}\in U_{n,i}$. Let $Q=\{[r_{n,0},r_{n,1}]: n\in\mathbb N\}$ be a dense subset of $(\mathbb Q\cap [0,1]^2)$, where all $r_{n,i}$ are distinct. Define a mapping $f:[0,1]\to Y$ by the formula
  $$
  f(x)=\left\{\begin{array}{ll}
                y_0, & \mbox{if\,\,} x\in [0,1]\setminus  Q, \\
                y_{n,i}, & \mbox{if\,\,} x=r_{n,i}\mbox{\,\, for some\,\,} n\in\mathbb N\mbox{\,\,and\,\,} i\in\{0,1\}.
              \end{array}
  \right.
  $$
Notice that  $f([0,1])\subseteq V$ and $f\in{\rm H}_1([0,1],Y)$, since every sequence $(y_{n,i})_{n=1}^\infty$ converges to  $y_0$ in $Y$.  Then there exists a sequence $(f_k)_{k=1}^\infty$ of continuous mappings $f_k:[0,1]\to Y$ which is pointwise convergent to  $f$.

Fix  $m\in\mathbb N$. Consider an open set
$$
G_m=\bigcup\limits_{k\ge m} f_k^{-1}(Y\setminus \overline{V})
$$
and prove that it is dense in $[0,1]$. Let  $n\in\mathbb N$ and $I=[r_{n,0},r_{n,1}]$. Since  $f_k(r_{n,i})\to f(r_{n,i})=y_{n,i}$, there exists $k\ge m$ such that $f_k(r_{n,i})\in U_{n,i}$ for $i=0,1$.  Then $f_k(I)\setminus \overline{V}\ne\emptyset$. It follows that $I\cap G_m\ne\emptyset$.

Notice that $\bigcap\limits_{m=1}^\infty G_m\subseteq f^{-1}(Y\setminus V)=\emptyset$, which contradicts to the Bairness of  $[0,1]$.
\end{proof}

The last two lemmas immediately imply the following result.

\begin{cor}
   Let $Y$ be a first-countable space such that ${\rm H}_1([0,1],Y)\subseteq {\rm B}_1([0,1],Y)$. Then $Y$ is almost arcwise connected and locally almost arcwise connected.
\end{cor}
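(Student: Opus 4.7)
The plan is immediate: the corollary is nothing more than the conjunction of the conclusions of Lemmas~\ref{lemma:aac} and~\ref{lemma:loc_aac}, and under the hypotheses of the statement (first-countability of $Y$ together with ${\rm H}_1([0,1],Y)\subseteq {\rm B}_1([0,1],Y)$) both lemmas are applicable. I would simply cite them in succession, with no intermediate construction required.

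For the first assertion, that $Y$ is almost arcwise connected, I would invoke Lemma~\ref{lemma:aac}. This step in fact uses no countability assumption on $Y$; it rests on the observation that the two-valued step function taking value $u\in U$ on $[0,1)$ and value $v\in V$ at $1$ lies in ${\rm H}_1([0,1],Y)$, hence by the standing hypothesis is the pointwise limit of some sequence of continuous maps, and one of those continuous maps must trace out an arc joining a point of $U$ to a point of $V$.

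For the second assertion, local almost arcwise connectedness, I would invoke Lemma~\ref{lemma:loc_aac}. Here the first-countability hypothesis is essential and enters the earlier proof only through the choice of a countable base $(V_n)$ of neighborhoods of a hypothetical obstructing point $y_0$, which is then combined with a Baire-category argument on $[0,1]$. Taking the two conclusions together gives the statement of the corollary.

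There is no real obstacle, and indeed the paper itself announces that the result follows ``immediately'' from the two preceding lemmas. The only point worth flagging is that first-countability is used exclusively in the local part: almost arcwise connectedness of $Y$ would follow from the Baire-one approximation hypothesis alone, without any countability assumption.
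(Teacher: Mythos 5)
Your proposal is correct and matches the paper exactly: the corollary is proved there by simply combining Lemmas~\ref{lemma:aac} and~\ref{lemma:loc_aac}, which is precisely what you do. Your additional observation that first-countability enters only in the local part is accurate and consistent with the hypotheses of the two lemmas.
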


We observe that the converse proposition is not true even for metrizable separable spaces $Y$ as Example~2 from \cite{Fos} shows.

\begin{lem}\label{lemma:reg_aac_is_lc}
  Every regular locally almost arcwise connected space  $X$ is locally connected.
\end{lem}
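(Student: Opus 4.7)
The plan is to verify that $X$ is connected im kleinen at every point, i.e.\ every neighborhood $V$ of every $x\in X$ contains a (not necessarily open) connected set having $x$ in its interior; it is classical that this property is equivalent to local connectedness.

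Fixing $x\in X$ and an open $V\ni x$, I would first use regularity to choose an open $V_1$ with $x\in V_1\subseteq\overline{V_1}\subseteq V$, and then apply local almost arcwise connectedness at $x$ to $V_1$ to obtain an open neighborhood $U\subseteq V_1$ of $x$ such that any two nonempty open subsets of $U$ can be joined by an arc in $\overline{V_1}$. As candidate connected set I take
$$
R \;=\; U\cup\bigcup\bigl\{\gamma([0,1]):\gamma\text{ is an arc in }\overline{V_1}\text{ with }\gamma(0),\gamma(1)\in U\bigr\},
$$
so that automatically $U\subseteq R\subseteq\overline{V_1}\subseteq V$; consequently, once $R$ is shown to be connected, $x$ lies in the interior of the connected subset $R$ of $V$, as required.

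The main step, and the anticipated obstacle, is the connectedness of $R$. I would argue by contradiction: suppose $R=A\sqcup B$ with $A,B$ nonempty and relatively open in $R$. Each arc image that went into the construction of $R$ is connected and therefore lies entirely in $A$ or in $B$; a case check shows $U\cap A$ and $U\cap B$ are both nonempty (any element of $A$ is either a point of $U$ or a point of some arc whose endpoints are in $U$ and, by connectedness of that arc, in $A$ as well). Writing $A=W_A\cap R$ with $W_A$ open in $X$ and using $U\subseteq R$, one has $U\cap A=U\cap W_A$, which is open in $X$, and similarly for $B$. The hypothesis on $U$ now supplies an arc $\gamma$ in $\overline{V_1}$ with $\gamma(0)\in U\cap A$ and $\gamma(1)\in U\cap B$. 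But both endpoints lie in $U$, so $\gamma([0,1])\subseteq R$ by the very definition of $R$, while this same connected arc image meets both $A$ and $B$, contradicting the partition of $R$.

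The self-referential design of $R$ is the key idea: $R$ contains exactly the arcs that the hypothesis can produce from pairs of open subsets of $U$, so any hypothetical disconnection of $R$ would automatically display two disjoint nonempty open subsets of $U$, whereupon the hypothesis manufactures a bridge arc that the construction forces back into $R$. Once connectedness of $R$ is in hand, $R$ witnesses that $X$ is connected im kleinen at $x$; since $x$ was arbitrary, $X$ is locally connected.
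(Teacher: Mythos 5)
Your proposal is correct and follows essentially the same route as the paper: build the set $U$ together with all arcs produced by the hypothesis, show it is connected by observing that any separation would yield two nonempty open subsets of $U$ which the hypothesis then joins by an arc lying inside the set, and conclude via the classical fact that connectedness im kleinen at every point implies local connectedness (the paper cites Moore for this). The only cosmetic difference is that the paper's family consists of all arcs in $W$ merely \emph{intersecting} $U$, while you take arcs with both endpoints in $U$; both versions work.
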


\begin{proof}
Fix a point $x\in X$ and an open neighborhood $W$ of $x$. Let $U$ and $V$ are open neighborhoods of $x$ such that $U\subseteq \overline{V}\subseteq W$ and each two open nonempty subsets of $U$ can be joined by an arc in $\overline{V}$. Consider a family $\mathcal C$ which consists of all arcs intersecting $U$ and lying in $W$. Put $G=(\cup \mathcal C)\cup U$ and show that $G$ is connected. Indeed, assume that $G=G_0\cup G_1$, where $G_0$ and $G_1$ are nonempty disjoint relatively open subsets of $G$. By definition of $G$ we have  $U\cap G_0\ne\emptyset$ and $U\cap G_1\ne\emptyset$. Then the sets $U\cap G_0$ and $U\cap G_1$ can be joined by an arc $\gamma$ lying in $W$. Since $\gamma\in\mathcal C$ and the set  $\gamma$ is connected, it follows that $\gamma\subseteq G_0$ or $\gamma\subseteq G_1$, a contradiction.
Hence, $G\subseteq W$ is a connected neighborhood of $x$. Therefore, $X$ is connected im kleinen at $x$.
It remains to apply Theorem 10 from~\cite[p.~90]{Moore}.
\end{proof}

\begin{lem}\label{lemma:complete_aac}
  Any completely metrizable almost arcwise connected and locally almost arcwise connected  space $X$ is arcwise connected and locally arcwise connected.
\end{lem}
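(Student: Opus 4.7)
The plan is a two-stage reduction: first show that $X$ is connected and locally connected, then bootstrap to arcwise and locally arcwise connectedness using completeness.

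To see $X$ is connected, suppose for contradiction $X = U \sqcup V$ with $U, V$ disjoint nonempty open sets. By almost arcwise connectedness there is an arc $\gamma$ in $X$ meeting both $U$ and $V$; then $(\gamma \cap U) \sqcup (\gamma \cap V)$ would be a separation of the connected set $\gamma$, a contradiction. Since $X$ is metrizable, hence regular, and is locally almost arcwise connected, Lemma~\ref{lemma:reg_aac_is_lc} yields that $X$ is locally connected.

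It remains to deduce arcwise connectedness and local arcwise connectedness. The plan is to invoke a Mazurkiewicz--Moore--Menger style theorem asserting that every connected, locally connected, completely metrizable space is arcwise connected and locally arcwise connected. The main obstacle is that the classical form of this statement is usually proved for Peano continua (compact, connected, locally connected metric spaces), so one should expect to construct the arcs by hand in this more general completely metrizable setting. The natural approach is dyadic: given $a, b \in X$, use local almost arcwise connectedness (which, combined with completeness and regularity, provides arbitrarily small arcs inside preassigned neighborhoods) to inductively construct continuous maps $\gamma_n \colon [0,1] \to X$ with $\gamma_n(0) = a$ and $\gamma_n(1) = b$ such that on each dyadic subinterval of order $n$ the image of $\gamma_n$ has diameter at most $2^{-n}$ and $\gamma_{n+1}$ agrees with $\gamma_n$ at the endpoints of order-$n$ subintervals. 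The resulting sequence is Cauchy in the uniform metric on $C([0,1], X)$, and by completeness of $X$ it converges uniformly to a continuous $\gamma \colon [0,1] \to X$ joining $a$ to $b$; a standard reparametrization removing backtracking upgrades $\gamma$ to an honest arc.

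The same dyadic construction, carried out inside a neighborhood $U \subseteq \overline{V}$ witnessing local almost arcwise connectedness at a point $x$, produces arcs joining any two points of $U$ within $\overline{V}$, which gives local arcwise connectedness and completes the proof. The technical heart of the argument is the inductive construction of the Cauchy sequence $(\gamma_n)$: at each step one must invoke local almost arcwise connectedness finitely many times (once per dyadic subinterval of order $n$) to refine the previous arc while keeping endpoints fixed and keeping the new pieces inside closures of small neighborhoods, so that the diameter bounds are preserved.
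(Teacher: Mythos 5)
Your proposal follows essentially the same route as the paper: connectedness is verified directly, local connectedness comes from Lemma~\ref{lemma:reg_aac_is_lc} (metrizable implies regular), and the final step is the arcwise connectedness theorem for connected, locally connected, completely metrizable spaces. The paper simply cites that last result (Theorem 1 of \cite[p.~254]{Kuratowski:Top:2}) instead of re-proving it, and your dyadic-approximation sketch is the standard argument behind it, so the two proofs coincide.
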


\begin{proof}
It is easy to see that $X$ is connected. Lemma~\ref{lemma:reg_aac_is_lc} implies that $X$ is locally connected. Then $X$ is arcwise connected and locally arcwise connected by Theorem 1 from~\cite[p.~254]{Kuratowski:Top:2}.
\end{proof}

\begin{thm} Let $Y$ be a completely metrizable space. Then the following statements are equivalent:
\begin{enumerate}
   \item[{\rm (i)}] $Y$ is  connected and locally arcwise connected;

  \item[{\rm (ii)}] ${\rm K}_1(X,Y)\cap \Sigma^f(X,Y)={\rm B}_1(X,Y)$ for any topological space $X$;

    \item[{\rm(iii)}] ${\rm H}_1([0,1],Y)={\rm B}_1([0,1],Y)$.
\end{enumerate}
\end{thm}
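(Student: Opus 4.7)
The plan is to close the cycle (i) $\Rightarrow$ (ii) $\Rightarrow$ (iii) $\Rightarrow$ (i); each arrow will reduce to a result already established earlier in the paper, so the bulk of the work is checking that the hypotheses are met.

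The arrow (i) $\Rightarrow$ (ii) is precisely the content of Theorem~\ref{th:EqualityB1SFD}, which holds for an arbitrary topological space $X$ and any metrizable connected and locally arcwise connected $Y$; since $Y$ is now completely metrizable, (i) supplies exactly those extra hypotheses.

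For (ii) $\Rightarrow$ (iii), I would specialize the hypothesis to $X=[0,1]$. The inclusion ${\rm B}_1([0,1],Y)\subseteq {\rm H}_1([0,1],Y)$ is the standard exhaustion argument already appearing inside the proof of Theorem~\ref{BaireOne_is_SFD}: write an open $V\subseteq Y$ as $\bigcup_k\overline{G_k}$ with $\overline{G_k}\subseteq G_{k+1}$ and observe that $f^{-1}(V)=\bigcup_k\bigcap_{n\ge k}f_n^{-1}(\overline{G_k})$ is $F_\sigma$. Conversely, given $f\in {\rm H}_1([0,1],Y)$, perfect normality of $[0,1]$ together with Proposition~\ref{prop:H1=B1}(2) puts $f$ in ${\rm K}_1([0,1],Y)$; completeness of $[0,1]$ together with Hansell's theorem quoted after Theorem~\ref{BaireOne_is_SFD} puts $f$ in $\Sigma([0,1],Y)$; and since $[0,1]$ is a metric, hence collectionwise normal and normal, space, the three classes $\Sigma$, $\Sigma^*$ and $\Sigma^f$ coincide on $[0,1]$, so in fact $f\in\Sigma^f([0,1],Y)$. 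Hypothesis (ii) then places $f$ in ${\rm B}_1([0,1],Y)$, giving the nontrivial inclusion.

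For (iii) $\Rightarrow$ (i) I would simply chain the three lemmas of this section. Lemma~\ref{lemma:aac} turns (iii) into the almost arcwise connectedness of $Y$. Since the metrizable $Y$ is first-countable, Lemma~\ref{lemma:loc_aac} then yields local almost arcwise connectedness. Finally, Lemma~\ref{lemma:complete_aac} invokes complete metrizability to upgrade these two properties to arcwise connectedness (hence connectedness) together with local arcwise connectedness, which is exactly (i).

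The only step with any subtlety is (ii) $\Rightarrow$ (iii): one must notice that an ${\rm H}_1$-map on $[0,1]$ simultaneously lies in ${\rm K}_1$ (via perfect normality) and in $\Sigma^f$ (via Hansell's $\sigma$-discreteness theorem plus normality), so that hypothesis (ii) can be applied at all. Everything else is routine bookkeeping atop previously proved results.
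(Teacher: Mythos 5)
Your proposal is correct and follows essentially the same route as the paper: (i)$\Rightarrow$(ii) via Theorem~\ref{th:EqualityB1SFD}, (ii)$\Rightarrow$(iii) via completeness of $[0,1]$ and Hansell's $\sigma$-discreteness theorem, and (iii)$\Rightarrow$(i) by chaining Lemmas~\ref{lemma:aac}, \ref{lemma:loc_aac} and \ref{lemma:complete_aac}. Your middle step merely spells out the details (perfect normality giving ${\rm H}_1={\rm K}_1$, and collectionwise normality plus normality of $[0,1]$ upgrading $\Sigma$ to $\Sigma^f$) that the paper leaves implicit in its one-line citation.
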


\begin{proof} The implication (i)$\Rightarrow$(ii) follows from Theorem~\ref{th:EqualityB1SFD}.

The implication (ii)$\Rightarrow$(iii) follows from the completeness of $[0,1]$ and Theorem 3 from~\cite{Hansell:1971}.

Finally, Lemmas \ref{lemma:aac},  \ref{lemma:loc_aac} and \ref{lemma:complete_aac} imply the implication (iii)$\Rightarrow$(i).
\end{proof}

According to Theorem 3 from~\cite{Hansell:1971} we have ${\rm H}_1(X,Y)\subseteq \Sigma^f(X,Y)$ for any metrizable space $Y$. Therefore, Theorem~\ref{th:EqualityB1SFD} implies the following fact.

\begin{cor}\label{cor:complete_case}
Let $X$ be a completely metrizable space and $Y$ be a metrizable connected and locally arcwise connected space. Then
  $$
  {\rm H}_1(X,Y)={\rm K}_1(X,Y)={\rm B}_1(X,Y).
  $$
\end{cor}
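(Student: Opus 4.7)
The plan is to derive the corollary by stringing together three previously-established facts: Proposition~\ref{prop:H1=B1}(2), the main result Theorem~\ref{th:EqualityB1SFD}, and the cited Hansell theorem. First I would observe that, since $X$ is completely metrizable, it is in particular perfectly normal, so Proposition~\ref{prop:H1=B1}(2) immediately yields ${\rm K}_1(X,Y)={\rm H}_1(X,Y)$. This pins down the middle term of the claimed chain and reduces the whole corollary to the single equality ${\rm H}_1(X,Y)={\rm B}_1(X,Y)$.

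For the nontrivial inclusion ${\rm H}_1(X,Y)\subseteq {\rm B}_1(X,Y)$ I would invoke Hansell's Theorem~3 from~\cite{Hansell:1971}, quoted in the paragraph preceding the corollary, to obtain ${\rm H}_1(X,Y)\subseteq \Sigma^f(X,Y)$. Combined with the previous step, any $f\in {\rm H}_1(X,Y)$ then lies in ${\rm K}_1(X,Y)\cap \Sigma^f(X,Y)$, which equals ${\rm B}_1(X,Y)$ by Theorem~\ref{th:EqualityB1SFD}. For the reverse inclusion ${\rm B}_1(X,Y)\subseteq {\rm H}_1(X,Y)$, the containment ${\rm B}_1\subseteq {\rm K}_1$ is part of Theorem~\ref{BaireOne_is_SFD} and already holds for arbitrary $X$ and metrizable $Y$; chaining it with ${\rm K}_1={\rm H}_1$ closes the loop.

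I do not expect a genuine obstacle here, since the corollary is essentially a packaging statement: complete metrizability of $X$ is needed only to feed Hansell's theorem, metrizability together with connectedness and local arcwise connectedness of $Y$ is needed only to feed Theorem~\ref{th:EqualityB1SFD}, and perfect normality of $X$ is what collapses ${\rm K}_1$ onto ${\rm H}_1$. The only book-keeping point worth verifying is that the $\sigma$-discrete class produced by Hansell's theorem really coincides with $\Sigma^f$ when $X$ is a metric space; this holds because a metrizable $X$ is collectionwise normal, so every discrete family is strongly discrete, and perfectly normal, so the enveloping open sets can be taken functionally open, which is precisely the content tacitly used in the paragraph cited just before the statement.
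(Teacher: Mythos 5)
Your proposal is correct and follows essentially the same route as the paper, which derives the corollary from the remark that ${\rm H}_1(X,Y)\subseteq\Sigma^f(X,Y)$ via Hansell's Theorem~3 combined with Theorem~\ref{th:EqualityB1SFD} (the collapse ${\rm K}_1={\rm H}_1$ for the perfectly normal space $X$ being Proposition~\ref{prop:H1=B1}(2), and the reverse inclusion coming from Theorem~\ref{BaireOne_is_SFD}). Your closing remark about upgrading discrete families to strongly functionally discrete ones in a metric space is exactly the book-keeping the paper leaves tacit.
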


The following example shows that Corollary~\ref{cor:complete_case} is not valid if $Y$ is not metrizable.

\begin{exm}
{\it There exists a  connected and locally arcwise connected first-countable Lindel\"{o}f space $Y$ such that
$$
{\rm H}_1(\mathbb R,Y)\setminus {\rm B}_1(\mathbb R,Y)\ne\emptyset.
$$
}
\end{exm}

\begin{proof}
  Let $Y$ be the set $(\mathbb R\times [0,+\infty),\tau)$ equipped with a topology $\tau$ such that the sets  $(x-\varepsilon,x+\varepsilon)\times (y-\varepsilon,y+\varepsilon)$, $\varepsilon>0$, are a neighborhood base of a point $(x,y)$ if $y>0$, and the sets  $\{(x,0)\}\cup ((x,x+\varepsilon)\times[0,\varepsilon))$, $\varepsilon>0$, are a neighborhood base of  $(x,y)$ if $y=0$.
  Notice that $Y$ is a Lindel\"{o}f first-countable connected and locally arcwise connected  space. We observe that $Y$ is not regular, since each point $(x,0)$ has not a closed neighborhood base. Evidently, the restriction of $\tau$ to the upper half-plane coincides with the Euclidean topology, and the restriction of  $\tau$ to $\mathbb R\times\{0\}$ coincides with the topology of the Sorgenfrey line.

 Let $\mathbb Q=\{r_n:n\in\mathbb N\}$ and $T=\mathbb R\setminus \mathbb Q$. Define a function $f:\mathbb R\to Y$ by
  $$
  f(x)=\left\{\begin{array}{ll}
  (x-\frac{1}{n},0), & \mbox{if\,\,\,} x=r_n, \\
  (x,0), & \mbox{if\,\,\,}x\in T.
\end{array}
\right.
  $$

In order to show that $f\in {\rm H}_1(\mathbb R,Y)$ it is sufficient to prove that the preimage of a set  $V=[a,b)\times\{0\}$, $a<b$, is  $F_\sigma$ in $\mathbb R$. Denote $E=f^{-1}(V)$. Notice that  $A=[a,b)\setminus E$ and $B=E\setminus (a,b)$ are subsets of $\mathbb Q$ and
\begin{gather}\label{eq:E_Sorg}
  E=([a,b)\setminus A)\cup B.
\end{gather}
Let  $A=\{r_{n_k}:k\in\mathbb N\}$, where $(n_k)_{k=1}^\infty$ is an increasing sequence. Then for every $k$ we have
$$
a\le r_{n_k}\le a+\frac{1}{n_k}.
$$
Hence, $r_{n_k}\to a$ in $\mathbb R$. It follows that $A$ is $G_\delta$ in $\mathbb R$. Then $E$ is $F_\sigma$ in $\mathbb R$ by~(\ref{eq:E_Sorg}).

Now we prove that $f\not\in {\rm B}_1(\mathbb R,Y)$. Assume to the contrary that there exists a sequence of continuous mappings $f_n:\mathbb R\to Y$ which is pointwise convergent to  $f$ on $\mathbb R$. For  $x\in\mathbb R$ we denote
$$
B(x)=\{(x,0)\}\cup((x,+\infty)\times[0,+\infty))
$$
and define
$$
A_n=\bigcap\limits_{k=n}^\infty\{x\in \mathbb R: f_k(x)\in B(f(x))\}
$$
for $n\in\mathbb N$. Then $\bigcup\limits_{n=1}^\infty A_n=\mathbb R$. Since $T$ is a Baire space, there exist $n\in\mathbb N$ and $(a,b)\subseteq\mathbb R$ with
$$
(a,b)\subseteq \overline{A_n\cap T}.
$$
Notice that $A_n\cap T\subseteq F$, where $F=\bigcap\limits_{k=n}^\infty\{x\in \mathbb R: \pi_X(f_k(x))\ge x\}$. Since the projection $\pi_X:Y\to \mathbb R$, $\pi_X(x,y)=x$, is upper semi-continuous, the set $F$ is closed in $\mathbb R$. Then
$(a,b)\subseteq F$. It follows that  $\pi_X(f(x))\ge x$ for all  $x\in (a,b)$, a contradiction.
\end{proof}

\section{Acknowledgements} The author would like to thank the referees for their helpful and constructive comments that greatly contributed to improving the final version of the paper.

{\small
}

\end{document}